\newtheorem{thm}{Theorem}[section]
\newtheorem{defn}[thm]{Definition}
\newtheorem{lemma}[thm]{Lemma}
\newtheorem{remark}[thm]{Remark}
\newtheorem{example}[thm]{Example}
\newcommand{\bmb}{\left( \begin{array}{rr}}
\newcommand{\enm}{\end{array}\right)}
\newcommand{\Z}{{\mathbb Z}}
\newcommand{\bm}{{\mathbf m}}
\newcommand{\bp}{{\mathbf p}}
\newcommand{\al}{{\alpha}}
\numberwithin{equation}{section}
\begin{document}
\title{The Non-Commutative $A_1$ $T$-system and its positive Laurent property}
\author{Philippe Di Francesco} 
\address{
Department of Mathematics, University of Illinois\\
Urbana, IL 61821 USA. \\
e-mail: philippe@illinois.edu
}
\begin{abstract}
We define a non-commutative version of the $A_1$ T-system, which underlies frieze patterns of the integer plane.
This system has discrete conserved quantities and has a particular reduction to the known non-commutative Q-system for $A_1$. 
We solve the system by generalizing the flat $GL_2$ connection method used in the commuting case to a 2$\times$2 flat matrix connection 
with non-commutative entries. This allows to prove the non-commutative positive Laurent phenomenon for the solutions when expressed in terms
of admissible initial data. These are rephrased as partition functions of paths with non-commutative weights on networks, and 
alternatively of dimer configurations with non-commutative weights on ladder graphs made of chains of squares and hexagons.
\end{abstract}

\maketitle
\date{\today}
\tableofcontents


\section{The $A_1$ $T$-system and its initial data}

$T$-systems were defined in the context of functional relations satisfied by the transfer matrices of integrable quantum spin chains with a Lie group symmetry \cite{KNS}.
Together with their associated $Y$-systems, these are instrumental in the Bethe Ansatz solution of these quantum systems. There is such a system for each classical Lie group, and in this note we will concentrate on the simplest $A_1$ case.

The $A_1$ $T$-system is the following system of recursion relations
\begin{equation}\label{clatsys}
T_{j,k+1}T_{j,k-1}=1+T_{j+1,k}T_{j-1,k}\qquad (j,k \in \Z)
\end{equation}
This expresses the ``frieze" condition \cite{Cox,FRISES} that on each elementary square of the lattice $\Z^2$ (tilted by $45^\circ$), 
the determinant of the corner variables is 1:
$$\det\begin{pmatrix} T_{j,k-1} & T_{j-1,k} \\T_{j+1,k}&T_{j,k+1} \end{pmatrix}=1$$

We restrict to the system \eqref{clatsys} with $j+k=1$ mod 2, in other words we only consider variables $T_{j,k}$ with $j+k=0$ mod 2.
Admissible initial data for this system are attached to a pair $(\bm,x_\bm)$ where $\bm$ is an infinite path
$\bm=(m_j)_{j\in\Z}$ such that $m_j\in\Z$, $j+m_j=0$ mod 2, 
and $|m_{j+1}-m_j|=1$ for all $j\in\Z$, and $x_\bm=\{t_j\}_{j\in \Z}$ is an infinite sequence of initial values along the path $\bm$.
The initial condition is simply the assignments:
\begin{equation}\label{ass} T_{j,m_j}=t_j, \qquad (j\in \Z) \end{equation}

The fundamental initial data corresponds to the path $\bm^{(0)}$,
with $m_j^{(0)}=j\,{\rm mod}\, 2$ for all $j\in\Z$.
Any other initial data $\bm$ may be obtained from $\bm_0$ via (forward/backward) mutations 
of the form $\mu_\ell^{\pm}$, $\ell\in \Z$, such that $\bm'=\mu_\ell^{\epsilon}(\bm)$ iff
$m'_j=m_j+ 2\epsilon \delta_{j,\ell}$, $\epsilon\in\{-1,1\}$, both $\bm$ and
$\bm'$ being paths. Each such mutation leaves the data
$t_{j}=t_{j}'$ unchanged for $j\neq \ell$, and updates
$t_\ell=T_{\ell,m_\ell}\to {t_\ell}'=T_{\ell,{m_\ell}'}$ by use of the relation \eqref{clatsys} for $j=\ell$, $k=m_\ell+\epsilon={m_\ell}'-\epsilon$,
namely ${t_\ell}'=(1+t_{\ell-1}t_{\ell+1})/t_\ell$.

This system was extensively studied and solved for various boundary conditions \cite{DFK13}. 
One important feature of the system is that it may be interpreted as attached to some special mutations in a suitable infinite rank cluster algebra
\cite{DFK08}. As such the solution is expected to be expressible as a Laurent polynomial of any admissible initial data (due to the Laurent property of cluster algebra \cite{FZI}). The solutions show that this Laurent phenomenon produces
Laurent polynomials with only non-negative integer coefficients. In addition, the system was found to be integrable, in that it admits two infinite sequences of conserved quantities \cite{DFK11}.
The connection to cluster algebra has allowed to define a quantum version of the system, 
via the associated quantum cluster algebra \cite{BZ},
in which variables no longer commute, but are subject to $q$-commutation relations within 
each cluster. It was shown in \cite{DFK11} that an analogous quantum Laurent property holds, 
this time with coefficients in $\Z_+[q,q^{-1}]$.

In all cases, the system was solved by introducing a flat $GL(2)$ connection, namely a $2\times 2$ matrix representation of the $T$-system relation, allowing for writing a  compact formula for the solution,  The latter may be interpreted in terms of paths on a network, or equivalently of matchings or dimer coverings of some suitable graphs \cite{DFK11,DFK13,DF13}.

All the $T$-systems admit a reduction to so-called $Q$-systems, in which the ``spectral parameter" index $j$ is omitted. For the case of $A_1$, this simply reads: $R_{k+1}R_{k-1}=R_{k}^2+1$.
One way of thinking of $R_{k}$ is as a solution $T_{j,k}$, $j+k=0$ mod 2, of the $T$-system \eqref{clatsys} which is 2-periodic in the variable $j$, 
for the 2-periodic fundamental initial data $(\bm^{(0)},x_{\bm^{(0)}})$ with the flat path 
$\bm^{(0)}$ defined above and assignments $(t_j)_{j\in \Z}$ such that $t_{j+2}=t_j$ for all $j$. This reduces effectively to the initial conditions $R_0=t_0$ and $R_1=t_1$ for the corresponding $Q$-system. 
This system is naturally attached to a rank 2 cluster algebra.

In addition to a quantum version inherited from the quantum version of the associated cluster algebra \cite{BZ}, 
this system was extended to a fully non-commutative setting \cite{Kon,DFK09b}, in which 
$R_k$ are now non-commuting invertible elements of a unital algebra $\mathcal A$. 
We still impose initial conditions $R_0=t_0$ and $R_1=t_1$, with fixed invertible elements 
$t_0,t_1\in {\mathcal A}$. This system is still integrable, i.e. 
it admits conserved quantities, and its solutions satisfy the non-commutative Laurent property.
It was solved using a formulation of $R_k$ as a partition function of paths with non-commutative weights, monomial in $t_0^{\pm1},t_1^{\pm 1}$
\cite{DFK09b}, thus displaying positivity of the coefficients as well.
This system was used to generalize cluster algebras for non-commutative cluster variables in rank 2, and a few examples in higher rank were constructed as well \cite{DFK10}. The Laurent property, and finally its positive version, were proved for non-commutative rank 2 cluster algebras in 
\cite{BR,LS,RUP}, thus completing the proof of a general conjecture by Kontsevich.

The purpose of this note is to introduce and solve a fully non-commutative version of the $A_1$ $T$-system, 
still enjoying the positive Laurent property in terms of non-commuting initial variables, and to solve it with 
non-commutative extensions of the previous solutions. The paper is organized as follows.

In Section 2, we define the non-commutative $A_1$ $T$-system and show its invariance and integrability properties.
The novel feature is that it involves {\it two} sets of non-commuting invertible variables of a unital algebra $\mathcal A$
at each site, subject to local mixed relations.

In Section 3 we show that this system reduces to respectively the non-commutative $A_1$ $Q$-system and the quantum $A_1$ $T$-system
for suitable choices of the variables.

In Section 4 we construct the general solution of the system, by introducing a flat $GL_2({\mathcal A})$ connection defined on its solutions.
This extends the method already used successfully in the commuting and quantum cases, and produces an explicit formula for the solution,
which displays the positive Laurent property manifestly. We further interpret this solution in the language of (non-commutative) networks, namely models of paths on directed graphs with non-commuting weights. The latter are identified bijectively with configurations of dimers on some suitable ladder graphs with non-commuting weights involving the initial data for the system.

We gather a few concluding remarks in Section 5.

\medskip

\noindent{\bf Acknowledgments.}  We thank M. Gekhtman, R. Kedem and M. Shapiro for discussions, and especially 
V. Retakh for an inspiring talk and subsequent discussions at the Oberwolfach workshop ``Cluster algebras and 
related topics", December 7-14, 2013 (see also \cite{RFO}). 
This work is supported by the NSF grant DMS 13-01636 and 
the Morris and Gertrude Fine endowment.

\section{The non-commutative  $A_1$ $T$-system}

\subsection{Definition}

We consider formal invertible, non-commuting variables $T_{j,k}$, $j,k\in \Z$, $j+k=0$ mod 2, and an involutive anti-automorphism denoted
by $\!\bullet$, providing us with a second collection of independent variables $T_{j,k}^\bullet$.

\begin{defn}\label{defaone}
The non-commutative (NC) $A_1$ $T$-system is the following system of recursion relations:
\begin{equation}\label{ncaonet}
T_{j,k+1} T_{j,k-1}^\bullet=1+T_{j-1,k} T_{j+1,k}^\bullet\qquad (j,k\in \Z)\end{equation}
together with the relations
\begin{equation}\label{quasico} T_{j,k-1}^{-1}T_{j+1,k}=T_{j+1,k}^\bullet(T_{j,k-1}^\bullet)^{-1}\quad {\rm and}\quad 
T_{j-1,k}T_{j,k-1}^{-1}=(T_{j,k-1}^\bullet)^{-1}T_{j-1,k}^\bullet \end{equation}
for all $j,k\in \Z$, $j+k=1$ mod 2.
\end{defn}

The system \eqref{ncaonet} above may be equivalently rewritten as:
$$ T_{j,k+1}= (T_{j,k-1}^\bullet)^{-1}+T_{j-1,k} T_{j,k-1}^{-1}T_{j+1,k}\qquad (j,k\in \Z; j+k=1\, {\rm mod}\, 2)$$
while \eqref{quasico} is unchanged. In terms of quasideterminants, this is also equivalent to:
$$ T_{j,k-1}^\bullet\, \left\vert \begin{matrix}T_{j,k-1} & T_{j-1,k}\\ T_{j+1,k}& T_{j,k+1}\end{matrix}\right\vert_{2,2}=1$$
in the notations of  \cite{GGRW}.

The system \eqref{ncaonet} must be supplemented by some initial conditions  defined as follows. 
Admissible initial data are now coded by a triple $(\bm,x_\bm,x_\bm^\bullet)$ 
made of an infinite path $\bm=(m_j)_{j\in \Z}$ (as in the commuting case)
and two sequences $x_\bm=\{t_j\}_{j\in \Z}$ and $x_\bm^\bullet=\{t_j^\bullet\}_{j\in \Z}$ of initial values, 
satisfying the following nearest neighbor relations:
\begin{eqnarray}
t_j^{-1}\, t_{j+1}&=&t_{j+1}^\bullet\, (t_j^\bullet)^{-1}  \quad {\rm if}\, m_{j+1}=m_j+1 \nonumber \\
t_j\, t_{j+1}^{-1}&=&(t_{j+1}^\bullet)^{-1}\, t_j^\bullet \quad  {\rm if}\, m_{j+1}=m_j-1  \label{quasicoinit}
\end{eqnarray}
The initial condition  $(\bm,x_\bm,x_\bm^\bullet)$ is simply the assignments:
\begin{equation}\label{ncass} 
T_{j,m_j}=t_j \qquad {\rm and} \qquad T_{j,m_j}^\bullet=t_j^\bullet 
\end{equation}

\subsection{Reflection invariance}

It is useful to note that the system is invariant under the composition of $\bullet$ and the symmetry $S_{a,b}$ defined
as $ST_{j,k}=T_{a-j,b-k}$ for some fixed integers $a,b$ such that $a+b=j+k$ mod 2. We have:
\begin{lemma}\label{reflec}
If $T_{j,k}$ is a solution of the NC $A_1$ $T$-system, then $S_{a,b}T_{j,k}=T_{a-j,b-k}$ is also a solution of the same system.
Moreover if we set initial conditions of the form $(\bm,x_\bm,x_\bm^\bullet)$, then
the quantity $S_{a,b}T_{j,k}=T_{a-j,b-k}$ solves the NC $A_1$ $T$-system for the initial data $(\bp,x_\bp,x_\bp^\bullet)$,
where $\bp=S_{a,b}\bm$ is the reflected path with $p_j=b-m_{a-i}$, $j\in \Z$, and the data $x_\bp=\{t_{a-j}\}_{j\in \Z}$,
$x_\bp^\bullet=\{t_{a-j}^\bullet\}_{j\in \Z}$.
\end{lemma}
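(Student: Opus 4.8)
The plan is to verify directly that the reflected-and-bulleted quantities satisfy both defining relations \eqref{ncaonet} and \eqref{quasico}, then track how the initial data transforms. The key observation is that $S_{a,b}$ acts on indices by $(j,k)\mapsto(a-j,b-k)$, which reverses orientation: it sends the ``future'' index $k+1$ to $b-k-1$ and sends $j+1$ to $a-j-1$. The crucial point is that $\bullet$ is an \emph{involutive anti-automorphism}, so composing $S_{a,b}$ with $\bullet$ reverses both the lattice orientation and the multiplicative order of products. These two reversals should conspire so that the reflected relation matches the original. I would therefore define the candidate solution as $U_{j,k}=S_{a,b}T_{j,k}=T_{a-j,b-k}$ together with $U_{j,k}^\bullet = (S_{a,b}T_{j,k})^\bullet$, and check the axioms of Definition~\ref{defaone} for $U$.

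First I would substitute into \eqref{ncaonet}. The left side of the relation for $U$ at site $(j,k)$ is $U_{j,k+1}U_{j,k-1}^\bullet = T_{a-j,b-k-1}\,(T_{a-j,b-k+1})^\bullet$. Setting $j'=a-j$, $k'=b-k$ (so that $j'+k'=j+k$ mod $2$, using the hypothesis $a+b\equiv j+k$), this is $T_{j',k'-1}(T_{j',k'+1})^\bullet$. This is not literally the left side of \eqref{ncaonet} but rather its $\bullet$-image read through the involution: applying $\bullet$ to the original relation $T_{j',k'+1}T_{j',k'-1}^\bullet = 1 + T_{j'-1,k'}T_{j'+1,k'}^\bullet$ and using that $\bullet$ is an involutive anti-automorphism (so $(XY)^\bullet=Y^\bullet X^\bullet$ and $(X^\bullet)^\bullet=X$) gives exactly $T_{j',k'-1}(T_{j',k'+1})^\bullet = 1 + T_{j'+1,k'}(T_{j'-1,k'})^\bullet$, and the right-hand side translates back to $1+U_{j-1,k}U_{j+1,k}^\bullet$ after the index substitution. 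I expect this matching to work cleanly once the anti-automorphism property is applied; the main bookkeeping is confirming that the indices $j\pm1$, $k\pm1$ are sent by $S_{a,b}$ to precisely the ones appearing in the $\bullet$-image of the original equation.

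Next I would check the quasi-commutation relations \eqref{quasico} for $U$, which is the step I expect to be the genuine obstacle. Here the anti-automorphism reverses the order of every product, so each of the two relations in \eqref{quasico} must map to the \emph{other} one (or to itself) under the combined action of $S_{a,b}$ and $\bullet$. I would apply $\bullet$ to the first relation of \eqref{quasico} at the reflected indices and verify it yields the second, and vice versa, again using $(X^{-1})^\bullet=(X^\bullet)^{-1}$. The orientation reversal of $S_{a,b}$ interchanges the roles of $j+1$ and $j-1$, which should match the interchange of the two relations; the subtlety is that $S_{a,b}$ preserves the parity condition $j+k\equiv1$ mod $2$ only because $a+b\equiv j+k$, so I would flag that this hypothesis is exactly what keeps the relations in their correct parity class.

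Finally I would identify the transformed initial data. Since $U_{j,k}$ is pinned along the reflected path $\bp=S_{a,b}\bm$, evaluating $U$ at $(j,p_j)$ gives $T_{a-j,b-p_j}=T_{a-j,m_{a-j}}=t_{a-j}$, establishing $x_\bp=\{t_{a-j}\}$, and likewise for $x_\bp^\bullet$. I would then confirm that the step conditions in \eqref{quasicoinit} for $(\bp,x_\bp,x_\bp^\bullet)$ follow from those for $(\bm,x_\bm,x_\bm^\bullet)$ under the same $\bullet$-plus-reflection argument, noting that an up-step of $\bm$ becomes a down-step of $\bp$ (and conversely), which is precisely why the two cases of \eqref{quasicoinit} get swapped. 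This last consistency check is where the sign convention $p_j=b-m_{a-j}$ and the interchange of the two initial-data relations must be seen to align.
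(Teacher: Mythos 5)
Your proposal follows the same route as the paper: apply the anti-automorphism $\bullet$ to \eqref{ncaonet} and then reflect indices via $(j,k)\mapsto(a-j,b-k)$, which reproduces the relation for $U_{j,k}=T_{a-j,b-k}$; the paper's proof is exactly this computation and simply asserts the claims about \eqref{quasico} and the initial data that you propose to verify in more detail. One small correction to your last step: the point reflection $S_{a,b}$ sends up-steps of $\bm$ to up-steps of $\bp$ (since $p_{j+1}-p_j=m_{a-j}-m_{a-j-1}$), so the two cases of \eqref{quasicoinit} are \emph{not} interchanged --- each relation maps to itself after taking inverses, with the roles of the two endpoints swapped.
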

\begin{proof}
Let us apply $\bullet$ to the system \eqref{ncaonet} and then take $k\to b-k$ and $j\to a-j$. We get
\begin{equation}
T_{a-j,b-k-1} T_{a-j,b-k+1}^\bullet=1+T_{a-j+1,b-k} T_{a-j-1,b-k}^\bullet
\end{equation}
Let us define $U_{j,k}=S_{a,b}T_{j,k}=T_{a-j,b-k}$. Then the above equation becomes:
$$ U_{j,k+1} U_{j,k-1}^\bullet=1+U_{j-1,k} U_{j+1,k}^\bullet$$
and the first statement of the lemma follows. The second follows from noting that the initial data for $U_{j,k}$ are exactly 
given by $(\bp,x_\bp,x_\bp^\bullet)$.
\end{proof}

If we choose for $(a,b)$ some vertex of the initial data path, Lemma \ref{reflec} allows us to only
compute the solution of the $T$-system {\it above} the initial data path, namely $T_{j,k}$ for all $k\geq m_j$.
Indeed, upon applying the reflection w.r.t. a vertex of $\bm$, we immediately get the solution {\it under} the path.
Throughout these notes we will therefore restrict ourselves to finding the solution above the path.

\subsection{Integrability}

The NC $A_1$ $T$-system is integrable in the following sense.
\begin{thm}\label{consqthm}
The quantities
\begin{eqnarray*}\Gamma_{j,k}&=& T_{j-1,k+1}T_{j,k}^{-1}+(T_{j,k}^\bullet)^{-1}T_{j+1,k-1}^\bullet \\
\Delta_{j,k}&=&T_{j,k}^{-1}T_{j+1,k+1}+T_{j-1,k-1}^\bullet (T_{j,k}^\bullet)^{-1}
\end{eqnarray*}
are conserved modulo the NC $A_1$ $T$-system, respectively in the $(1,1)$ and $(-1,1)$ directions,
namely: $\Gamma_{j,k}=\Gamma_{j-1,k-1}=\Gamma_{j-k,0}\equiv \Gamma_{j-k}$ and 
$\Delta_{j,k}=\Delta_{j+1,k-1}=\Delta_{j+k,0}\equiv \Delta_{j+k}$ for all $j,k\in \Z$.
Moreover, the NC $A_1$ $T$-system solutions satisfy the following left/right linear recursion relations:
\begin{eqnarray*} T_{j-1,k+1}-\Gamma_{j-k}\, T_{j,k}+T_{j+1,k-1}&=&0 \\
T_{j+1,k+1}-T_{j,k}\, \Delta_{j+k}+T_{j-1,k-1}&=&0
\end{eqnarray*}
in which the non-trivial coefficient is a conserved quantity.
\end{thm}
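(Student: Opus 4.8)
The plan is to treat the three assertions in a definite order. The two linear recursions turn out to be equivalent reformulations of the definitions of $\Gamma_{j,k}$ and $\Delta_{j,k}$, obtained purely from the quasi-commutation relations \eqref{quasico}; only the conservation laws carry real content, and they are where the $T$-system relation \eqref{ncaonet} itself is used. So I would dispose of the recursions first and then prove conservation by a direct computation.

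For the recursions I would first record the two consequences of \eqref{quasico} valid at any admissible position, obtained by clearing denominators with the appropriate index shift: the anti-diagonal relation $T_{j+1,k-1}^\bullet\,T_{j,k}=T_{j,k}^\bullet\,T_{j+1,k-1}$ and the diagonal relation $T_{j,k}\,T_{j-1,k-1}^\bullet=T_{j-1,k-1}\,T_{j,k}^\bullet$. Multiplying the definition of $\Gamma_{j,k}$ on the right by $T_{j,k}$ and applying the first of these collapses the bullet term, giving $\Gamma_{j,k}\,T_{j,k}=T_{j-1,k+1}+T_{j+1,k-1}$, which is exactly the asserted recursion for $\Gamma$; multiplying the definition of $\Delta_{j,k}$ on the left by $T_{j,k}$ and using the diagonal relation gives $T_{j,k}\,\Delta_{j,k}=T_{j+1,k+1}+T_{j-1,k-1}$, the recursion for $\Delta$. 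This leaves the collected forms $\Gamma_{j,k}=(T_{j-1,k+1}+T_{j+1,k-1})T_{j,k}^{-1}$ and $\Delta_{j,k}=T_{j,k}^{-1}(T_{j+1,k+1}+T_{j-1,k-1})$, and it remains only to prove conservation.

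For $\Gamma_{j,k}=\Gamma_{j-1,k-1}$ the crucial auxiliary identity, read off from the diagonal relation above, is $T_{j,k}^\bullet(T_{j-1,k-1}^\bullet)^{-1}=T_{j-1,k-1}^{-1}\,T_{j,k}$, which is precisely what converts bullet factors into non-bullet ones. Writing \eqref{ncaonet} at the sites $(j-1,k)$ and $(j,k-1)$ gives $T_{j-1,k+1}T_{j-1,k-1}^\bullet=1+T_{j-2,k}T_{j,k}^\bullet$ and $T_{j,k}T_{j,k-2}^\bullet=1+T_{j-1,k-1}T_{j+1,k-1}^\bullet$. Solving the first for $T_{j-1,k+1}$, solving the $\bullet$-image of the second for $T_{j+1,k-1}$, and inserting the auxiliary identity, I expect the two inhomogeneous contributions $\pm(T_{j-1,k-1}^\bullet)^{-1}$ to cancel in the sum, leaving $T_{j-1,k+1}+T_{j+1,k-1}=(T_{j-2,k}+T_{j,k-2})T_{j-1,k-1}^{-1}T_{j,k}$. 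Combined with the collected form of $\Gamma$ this yields $\Gamma_{j,k}=(T_{j-2,k}+T_{j,k-2})T_{j-1,k-1}^{-1}=\Gamma_{j-1,k-1}$, and iterating gives $\Gamma_{j,k}=\Gamma_{j-k,0}$.

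The conservation law for $\Delta$ is entirely analogous, using \eqref{ncaonet} at the sites $(j+1,k)$ and $(j,k-1)$ together with the anti-diagonal quasi-commutation relation, and iterates to $\Delta_{j,k}=\Delta_{j+k,0}$. The main obstacle throughout is not conceptual but purely the bookkeeping of the non-commuting factors: each invocation of \eqref{quasico} must be made with correctly matched indices so that bullet and non-bullet variables pair off and the inhomogeneous $1$'s cancel, after which the surviving factors telescope into the shifted conserved quantity. Once the cancellation pattern is pinned down in the $\Gamma$ case, the $\Delta$ case follows by the same template.
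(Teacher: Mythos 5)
Your proposal is correct and is essentially the paper's own argument with the two conclusions established in the opposite order: the paper first proves $\Gamma_{j,k-1}=\Gamma_{j+1,k}$ (resp.\ $\Delta_{j-1,k}=\Delta_{j,k-1}$) by combining \eqref{ncaonet} at two neighboring sites with the relations \eqref{quasico} so that the inhomogeneous terms cancel, and only afterwards collapses the bullet term to obtain the collected form $\Gamma_{j,k}=(T_{j-1,k+1}+T_{j+1,k-1})T_{j,k}^{-1}$ and the linear recursion, whereas you derive the collected forms first and run the same cancellation on them --- the two computations are interchangeable, and your $\Gamma$ calculation checks out exactly. One small correction to your sketch of the $\Delta$ case: the auxiliary identities actually needed there, namely $T_{j+2,k}^\bullet(T_{j+1,k-1}^\bullet)^{-1}=T_{j+1,k-1}^{-1}T_{j+2,k}$ and $T_{j,k-2}^\bullet(T_{j+1,k-1}^\bullet)^{-1}=T_{j+1,k-1}^{-1}T_{j,k-2}$, are again instances of the first (diagonal) relation of \eqref{quasico} rather than the anti-diagonal one; with that substitution your template goes through verbatim.
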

\begin{proof}
We start from eq.\eqref{ncaonet}:
\begin{equation}\label{zer}
T_{j,k+1} T_{j,k-1}^\bullet=1+T_{j-1,k} T_{j+1,k}^\bullet
\end{equation}
Let us multiply the equation \eqref{zer} on the {\it right} by $(T_{j,k-1}^\bullet)^{-1} T_{j+1,k}^{-1}$. We get:
\begin{equation}\label{prem} T_{j,k+1}T_{j+1,k}^{-1}=(T_{j,k-1}^\bullet)^{-1} T_{j+1,k}^{-1}+ T_{j-1,k} T_{j,k-1}^{-1} 
\end{equation}
On the other hand, multiplying  \eqref{zer} on the {\it left} by $(T_{j-1,k}^\bullet)^{-1} T_{j,k+1}^{-1}$ yields:
$$(T_{j-1,k}^\bullet)^{-1} T_{j,k-1}^\bullet
=(T_{j-1,k}^\bullet)^{-1} T_{j,k+1}^{-1}+(T_{j,k+1}^\bullet)^{-1}T_{j+1,k}^\bullet 
$$
Substituting $(j,k)\to (j+1,k-1)$ gives
\begin{equation}\label{deuze}T_{j,k-1}^\bullet (T_{j+1,k}^\bullet)^{-1} 
=(T_{j,k-1}^\bullet)^{-1} T_{j+1,k}^{-1}+(T_{j+1,k}^\bullet)^{-1}T_{j+2,k-1}^\bullet \, ,
\end{equation}
Subtracting \eqref{prem} from \eqref{deuze} finally yields:
$$ T_{j,k-1}^\bullet (T_{j+1,k}^\bullet)^{-1}+T_{j-1,k} T_{j,k-1}^{-1} =T_{j,k+1}T_{j-1,k}^{-1}+(T_{j+1,k}^\bullet)^{-1}T_{j+2,k-1}^\bullet $$
which reads simply $\Gamma_{j,k-1}=\Gamma_{j+1,k}$. Substituting
$(T_{j,k}^\bullet)^{-1}T_{j+1,k-1}^\bullet=T_{j+1,k-1}T_{j,k}^{-1}$ in the definition of $\Gamma_{j,k}$, 
we get $\Gamma_{i,j}=(T_{j-1,k+1}+T_{j+1,k-1})T_{j,k}^{-1}$
and the linear recursion relation follows. The results for $\Delta_{j,k}$ follow analogously. Let us multiply \eqref{zer} on the left by $T_{j-1,k}^{-1}$ and on the right by $(T_{j,k-1}^\bullet)^{-1}$, and use $T_{j+1,k}^\bullet (T_{j,k-1}^\bullet)^{-1}=T_{j,k-1}^{-1}T_{j+1,k}$ to get:
\begin{equation}\label{troize}T_{j-1,k}^{-1} T_{j,k+1}=
T_{j-1,k}^{-1} (T_{j,k-1}^\bullet)^{-1}+T_{j,k-1}^{-1}T_{j+1,k} \,
\end{equation}
On the other hand, let us multiply \eqref{zer} on the left by $T_{j,k+1}^{-1}$ and on the right by $(T_{j+1,k}^\bullet)^{-1}$, 
and use $T_{j,k+1}^{-1}T_{j-1,k}=T_{j-1,k}^\bullet (T_{j,k+1}^\bullet)^{-1}$ to get:
$$T_{j,k-1}^\bullet(T_{j+1,k}^\bullet)^{-1}=T_{j,k+1}^{-1}(T_{j+1,k}^\bullet)^{-1}+T_{j-1,k}^\bullet (T_{j,k+1}^\bullet)^{-1}
$$
Substituting $(j,k)\to (j-1,k-1)$ gives
\begin{equation}\label{quatze}T_{j-1,k-2}^\bullet(T_{j,k-1}^\bullet)^{-1}=T_{j-1,k}^{-1}(T_{j,k-1}^\bullet)^{-1}
+T_{j-2,k-1}^\bullet (T_{j-1,k}^\bullet)^{-1}
\end{equation}
Subtracting \eqref{troize} from \eqref{quatze} finally yields:
$$T_{j-1,k}^{-1} T_{j,k+1}+T_{j-2,k-1}^\bullet (T_{j-1,k}^\bullet)^{-1}= T_{j,k-1}^{-1}T_{j+1,k}+T_{j-1,k-2}^\bullet(T_{j,k-1}^\bullet)^{-1}$$
which reads simply $\Delta_{j-1,k}=\Delta_{j,k-1}$. The linear recursion relation follows
by substituting $T_{j-1,k-1}^\bullet (T_{j,k}^\bullet)^{-1}=T_{j,k}^{-1}T_{j-1,k-1}$ in the definition of 
$\Delta_{j,k}$, and the theorem is proved.
\end{proof}

\begin{remark}\label{selfad}
Both conserved quantities of Theorem \ref{consqthm} are invariant under $\bullet$ as a consequence of the relations \eqref{quasico}.
This implies the following linear recursion relations for the $T^\bullet$ variables:
\begin{eqnarray*} T_{j-1,k+1}^\bullet-T_{j,k}^\bullet \, \Gamma_{j-k}+T_{j+1,k-1}^\bullet&=&0 \\
T_{j+1,k+1}^\bullet-\Delta_{j+k}T_{j,k}^\bullet +T_{j-1,k-1}^\bullet&=&0
\end{eqnarray*}
:
\end{remark}

\subsection{Main result}

We now state the main theorem of these notes:

\begin{thm}{(NC positive Laurent Property)}\label{main}
For any fixed initial conditions $(\bm,x_\bm,x_\bm^\bullet)$ the non-commutative $A_1$ $T$-system's solutions $T_{j,k},T_{j,k}^\bullet$
are non-commutative Laurent polynomials of the initial variables $t_i,t_i^\bullet$, $i\in \Z$, with non-negative integer coefficients.
\end{thm}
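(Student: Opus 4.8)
The plan is to mimic the classical and quantum strategies the author cites, namely to build a flat $GL_2(\mathcal A)$ connection whose transport matrices reproduce the $T$-system recursion, and then read off the solution as an explicit ordered product over a lattice path. First I would attach to each edge of the initial data path a transfer matrix $M_j \in GL_2(\mathcal A)$ whose entries are monomials in $t_j^{\pm 1}, t_{j+1}^{\pm 1}$ and their $\bullet$-images, chosen so that the ``flatness'' (path-independence) of the ordered product encodes precisely the relation \eqref{ncaonet} together with the quasi-commutation constraints \eqref{quasico}. The conserved quantities $\Gamma_{j-k}$ and $\Delta_{j+k}$ from Theorem \ref{consqthm} are the key tool here: the linear recursions
\begin{equation*}
T_{j-1,k+1}-\Gamma_{j-k}\,T_{j,k}+T_{j+1,k-1}=0
\end{equation*}
show that the solution propagates along each diagonal by a $2\times 2$ transfer matrix $\left(\begin{smallmatrix}\Gamma & -1\\ 1 & 0\end{smallmatrix}\right)$ with non-commuting entry $\Gamma$, which is exactly the kind of $GL_2(\mathcal A)$ data one wants. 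I would verify flatness by checking the single commuting-square compatibility (a local Yang--Baxter-type move) and then invoke it to move any target vertex $(j,k)$ with $k\ge m_j$ down to the path.

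Once flatness is established, I would express $T_{j,k}$ as an ordered product of transfer matrices running from the target vertex down to the initial path, with the two seed values $t_\ell, t_{\ell+1}$ (and their $\bullet$-partners) inserted at the boundary. By construction each matrix entry is a Laurent monomial in the initial data with coefficient $+1$, so the $(1,1)$ or $(1,2)$ entry of the product is manifestly a sum of such monomials. To make positivity transparent I would reinterpret this ordered matrix product as a partition function of directed paths on a network with non-commutative edge weights, exactly as promised in the abstract: each lattice path from source to sink contributes the ordered product of its edge weights, all of which are monomials in $t_i^{\pm1},(t_i^\bullet)^{\pm1}$, so no cancellation can ever occur. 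Positivity and the Laurent property then follow simultaneously, and integrality of coefficients is automatic since every path contributes weight $1$ times a monomial.

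There is one genuinely subtle point that the non-commutativity forces and that has no counterpart in the commuting case. The quasi-commutation relations \eqref{quasico}--\eqref{quasicoinit} are constraints, not definitions, so I must check that the ordered-product formula is \emph{consistent} with them, i.e. that the two sequences $x_\bm$ and $x_\bm^\bullet$ generated by the transfer matrices continue to satisfy the nearest-neighbor relations at every step. Equivalently, under a single mutation $\mu_\ell^\epsilon$ the updated pair $({t_\ell}',{t_\ell^\bullet}')$ must again satisfy the appropriate relation in \eqref{quasicoinit} with its neighbors. I would discharge this by direct computation on one mutation, using Remark \ref{selfad} (the $\bullet$-invariance of $\Gamma,\Delta$) to keep the $T$ and $T^\bullet$ recursions in lockstep, and then propagate by induction along the mutation sequence connecting $\bm^{(0)}$ to an arbitrary $\bm$.

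The hard part will be precisely this bookkeeping of the $\bullet$ anti-automorphism inside the ordered products: because $\bullet$ reverses the order of factors, the transfer matrix for the $T^\bullet$ variables is the $\bullet$-image (hence order-reversed) of the one for the $T$ variables, and I must arrange the network orientation so that both are realized by paths traversed in compatible directions. The reflection symmetry of Lemma \ref{reflec} should help here, letting me fix a single convention (solving only above the path) and derive the rest by $S_{a,b}\circ\bullet$. Establishing that the non-commutative weights assemble into a genuinely flat connection, rather than merely a formal one, is where the constraints \eqref{quasico} must be used in an essential way; once flatness survives the $\bullet$-twist, the positive Laurent property drops out of the path/dimer interpretation with no further cancellation to control.
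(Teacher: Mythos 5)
Your overall plan coincides with the paper's proof: attach $2\times 2$ matrices with Laurent-monomial entries to the steps of the initial-data path, show that the local exchange of two adjacent matrices encodes the relation \eqref{ncaonet} while propagating the constraints \eqref{quasico}, conclude that the ordered product is mutation-invariant, read off $T_{j,k}$ as the $(1,1)$ entry times $t_{j_1}$, and interpret the result as a network path partition function whose weights are monomials, so positivity is manifest. This is exactly the content of Lemma \ref{crucial}, Theorem \ref{solution} and Theorem \ref{netsolthm}, and your concern about verifying that the quasi-commutation relations survive each mutation is precisely what the second half of Lemma \ref{crucial} discharges (the relations $a^{-1}x=x^\bullet (a^\bullet)^{-1}$ and $xc^{-1}=(c^\bullet)^{-1}x^\bullet$ are reproduced for the mutated pair).

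One ingredient of your plan, however, is a misstep and would defeat the purpose if pursued: the transfer matrix $\left(\begin{smallmatrix}\Gamma & -1\\ 1 & 0\end{smallmatrix}\right)$ built from the conserved quantities is \emph{not} the flat connection that yields positivity. Its $(1,2)$ entry is $-1$, and $\Gamma$ itself is a \emph{binomial} in the initial data, so an ordered product of such matrices is a signed sum of products of binomials --- the Laurent property might still be extracted from it, but positivity certainly is not ``manifest'' and would require controlling cancellations, which is the whole difficulty. The paper's $U$ and $V$ matrices are attached to the up/down steps of the initial-data path itself (not to diagonal propagation via $\Gamma$, $\Delta$), and it is the factorized, monomial-entry form $V(a,b)U(b,c)=U(a,x)V(x,c)$ that carries the positivity; the conserved quantities of Theorem \ref{consqthm} play no role in the proof of Theorem \ref{main}. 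Relatedly, your base case should not be the linear recursion along a diagonal but a path section on which the matrix product telescopes trivially (the paper uses a ``maximal'' section of up steps followed by down steps, for which $(M_\bp)_{1,1}=\theta_j\theta_{j_1}^{-1}$); from there one peels down to an arbitrary $\bm$ by the exchange lemma. If you replace the $\Gamma$-based connection by the step-attached monomial matrices throughout, your argument becomes the paper's.
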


To prove this theorem, we will construct the solutions of the NC $A_1$ $T$-system
explicitly by means of a matrix representation (see Section 4 below). Before going into this, we show in the next section that the $A_1$ $T$-system
above restricts to known systems for particular $T$'s and $T^\bullet$'s.

\section{Restrictions to known cases}

In this section, we show that the NC $A_1$ $T$-system restricts to some known non-commutative 
systems for suitable choices of the variables $T_{j,k}$ and $T_{j,k}^\bullet$.

\subsection{The NC $A_1$ $Q$-system}

\begin{defn}{\cite{DFK09b}}\label{aoneqdef}
The Non-Commutative $A_1$ $Q$-system is the following set of recursion relations for non-commutative invertible variables $R_n$, $n\in \Z$:
\begin{equation}\label{konts}
R_{n+1}\,R_n^{-1}\,R_{n-1}=R_n+R_n^{-1} \qquad (n\in \Z)
\end{equation}
\end{defn}

The initial data $(R_0,R_1)$ determine the value of a conserved quantity $C$ such that
$$ R_{n+1}^{-1}R_n\,R_{n+1}R_n^{-1}=R_1^{-1}R_0\,R_1R_0^{-1}=C$$
which can be rewritten as the following quasi-commutation relations:
\begin{equation}\label{kontquasi}R_n\,R_{n+1}=R_{n+1}\,C\,R_n\quad {\rm or}\quad R_n^{-1}\,R_{n+1}\,C=R_{n+1}\,R_n^{-1}
\end{equation}
while the main
equation can be rewritten as
\begin{equation}\label{kontq} R_{n+1}C R_{n-1}=1+R_n^{2} \end{equation}
or equivalently $R_{n-1} R_{n+1}C=1+R_nCR_nC$.
Moreover, there is a second conserved quantity\cite{DFK09b}:
\begin{equation}\label{cons2} 
K=R_{n+1}R_n^{-1}+R_{n}^{-1}R_{n-1}=R_1R_0^{-1}+R_1^{-1}R_0^{-1}+R_1^{-1}R_0
\end{equation}
modulo the $A_1$ $Q$-system relation \eqref{konts}.

We have the following:

\begin{thm}
The NC $A_1$ $T$-system of Def.\ref{defaone} reduces to the NC $A_1$ $Q$-system of Def.\ref{aoneqdef} for particular choices of 
the non-commutative variables $T_{j,k}$.
\end{thm}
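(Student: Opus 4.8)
The plan is to exhibit an explicit dictionary between the $T$-variables and the $Q$-variables, then verify that the defining relations \eqref{ncaonet} and \eqref{quasico} collapse onto \eqref{konts} and \eqref{kontquasi} under this dictionary. The guiding intuition is the commuting reduction already described in the text: the $Q$-system is recovered from a $T$-system solution that is $2$-periodic in the spectral index $j$, living on the flat fundamental path $\bm^{(0)}$. So first I would impose a $2$-periodicity (or quasi-periodicity) Ansatz in $j$ on the solution $T_{j,k}$, reducing the doubly-indexed family to a singly-indexed one indexed effectively by $k$ (or by $n=$ some linear combination of $j,k$), and I would guess that $R_n$ is obtained by setting $T_{j,k}=R_{(j+k)/2}$ or a similar diagonal reading, with the $\bullet$-variables $T_{j,k}^\bullet$ being forced to coincide with, or be a simple conjugate of, the $R$'s by the constraint \eqref{quasico}.

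Second, I would substitute this Ansatz into the main $T$-relation $T_{j,k+1}T_{j,k-1}^\bullet=1+T_{j-1,k}T_{j+1,k}^\bullet$ and check that it becomes the $Q$-relation in the form \eqref{kontq}, namely $R_{n+1}CR_{n-1}=1+R_n^2$. This is where the conserved quantity $C$ must enter: the factor $C$ in \eqref{kontq} should emerge precisely from the quasi-commutation data carried by \eqref{quasico}, so I expect the identification to send the constraint relations \eqref{quasico} onto \eqref{kontquasi}. Concretely, I anticipate that under the periodicity reduction the two equations of \eqref{quasico} both degenerate to a single relation expressing $R_nR_{n+1}=R_{n+1}\,C\,R_n$, with $C$ read off from the initial data $t_0,t_1,t_0^\bullet,t_1^\bullet$ along $\bm^{(0)}$. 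I would also verify that the $\bullet$-involution, being an anti-automorphism, is compatible with this reduction — likely $T^\bullet$ reduces to the same $R$'s (the $Q$-system has no separate $\bullet$-family), which forces a self-adjointness condition that I would check is consistent with \eqref{quasicoinit} on the fundamental path.

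I would then confirm the conserved-quantity picture for consistency: the $T$-system conserved quantities $\Gamma_{j-k}$ and $\Delta_{j+k}$ of Theorem \ref{consqthm} should, under the reduction, specialize to the $Q$-system conserved quantities $C$ and $K$ of \eqref{kontquasi}--\eqref{cons2}. Matching $\Gamma$ or $\Delta$ against $K=R_{n+1}R_n^{-1}+R_n^{-1}R_{n-1}$ gives an independent check that the dictionary is correct, and pins down which diagonal direction in the $(j,k)$ lattice corresponds to the $Q$-system's single index. This is essentially bookkeeping once the Ansatz is fixed.

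The main obstacle I expect is getting the non-commutative bookkeeping exactly right: in the commuting case the reduction is transparent because factors can be freely reordered, but here the placement of $C$ in \eqref{kontq} and the precise form of the quasi-commutation \eqref{quasico} are order-sensitive, so the correct substitution rule for $T_{j,k}^\bullet$ in terms of the $R_n$ (whether it is $R_n$ itself, $C^{\pm1}$-conjugated, or shifted) must be chosen so that the $1$ on the right-hand side of \eqref{ncaonet} lands in the right spot and the $C$ is produced rather than spuriously introduced. I would resolve this by working out the smallest nontrivial instance (a single elementary square on $\bm^{(0)}$) by hand, reading off the forced identification from \eqref{quasicoinit}, and only then asserting the general periodic reduction; the rest should follow by direct substitution.
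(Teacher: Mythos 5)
Your high-level strategy (write an explicit dictionary $T\mapsto$ expression in the $R_n$, then verify that \eqref{ncaonet} and \eqref{quasico} collapse onto \eqref{kontq} and \eqref{kontquasi}) is the same as the paper's, but your concrete Ansatz has a gap that would stop the argument. First, the index: the correct reading is $T_{j,k}\mapsto R_k$, not a diagonal reading $R_{(j+k)/2}$. With the diagonal reading the relation \eqref{ncaonet} turns into $R_nR_{n-1}^\bullet=1+R_{n-1}R_n^\bullet$ (both sides involve only two consecutive indices), which cannot match $R_{k+1}CR_{k-1}=1+R_k^2$; with the reading by $k$ the index pattern $({k+1},{k-1})$ vs.\ $(k,k)$ comes out right. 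Second, and more seriously, a bare substitution $T_{j,k}=R_k$, $T_{j,k}^\bullet=R_k$ (or any fixed conjugate) forces $C=1$: the relation $T_{j,k-1}^{-1}T_{j+1,k}=T_{j+1,k}^\bullet(T_{j,k-1}^\bullet)^{-1}$ would become $R_{k-1}^{-1}R_k=R_kR_{k-1}^{-1}$, i.e.\ commutativity of consecutive $R$'s. The $C$ in \eqref{kontq} does not ``emerge'' from \eqref{quasico} under a periodic Ansatz; it has to be put in by hand.

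The missing idea is the dressing $T_{j,k}=C^{-a_{j,k}}R_kC^{b_{j,k}}$, $T_{j,k}^\bullet=C^{-c_{j,k}}R_kC^{d_{j,k}}$ with four \emph{position-dependent integer exponents}. Imposing \eqref{quasico} via the quasi-commutation \eqref{kontquasi} yields a coupled linear recursion for $a,b,c,d$ (the paper's \eqref{systemabcd}), whose solution involves unboundedly growing floor-function exponents such as $a_{j,k}=-\lfloor(j-k+2)/4\rfloor$ --- so ``$C^{\pm1}$-conjugated'' underestimates what is required. A consequence you should be aware of is that $2$-periodicity in $j$ fails in the NC setting: one only gets the quasi-periodicity $T_{j+4,k}=CT_{j,k}C$, which the paper explicitly contrasts with the commuting case, so your guiding intuition needs to be revised rather than merely hedged. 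Finally, a small correction to your consistency check: both $\Gamma$ and $\Delta$ reduce to conjugates of the single conserved quantity $K$ of \eqref{cons2}; the quantity $C$ is not among the $T$-system's conserved quantities $\Gamma,\Delta$ but is encoded in the constraint relations \eqref{quasico} themselves. Your plan of working out a small case by hand would likely reveal the need for the $C$-powers, but as written the proposal does not contain the device that makes the reduction work.
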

\begin{proof}
Setting:
\begin{equation}\label{chgTQ} 
T_{j,k}=C^{-a_{j,k}}R_k C^{b_{j,k}}, \qquad T_{j,k}^\bullet=C^{-c_{j,k}}R_k C^{d_{j,k}}\qquad (j,k\in \Z; j+k=0\, {\rm mod}\, 2)
\end{equation}
we easily get:
\begin{eqnarray*} T_{j,k-1}^{-1}T_{j+1,k}&=&C^{-b_{j,k-1}}R_{k-1}^{-1}C^{a_{j,k-1}-a_{j+1,k}}R_{k}C^{b_{j+1,k}} \\
T_{j+1,k}^\bullet(T_{j,k-1}^\bullet)^{-1}&=& C^{-c_{j+1,k}}R_k C^{d_{j+1,k}-d_{j,k-1}}R_{k-1}^{-1} C^{c_{j,k-1}}\\
T_{j-1,k}T_{j,k-1}^{-1}&=&C^{-a_{j-1,k}}R_k C^{b_{j-1,k}-b_{j,k-1}}R_{k-1}^{-1}C^{a_{j,k-1}}\\
(T_{j,k-1}^\bullet)^{-1}T_{j-1,k}^\bullet &=&C^{-d_{j,k-1}}R_{k-1}^{-1} C^{c_{j,k-1}-c_{j-1,k}}R_kC^{d_{j-1,k}}
\end{eqnarray*}
These boil down to the quasi-commutation \eqref{kontquasi} iff the following relations are satisfied:
\begin{eqnarray}a_{j+1,k}&=&a_{j,k-1}=d_{j-1,k}-1,\qquad  b_{j+1,k}-1=c_{j,k-1}=c_{j-1,k}\nonumber \\
c_{j+1,k}&=&b_{j,k-1}=b_{j-1,k}, \qquad \qquad \quad d_{j+1,k}=d_{j,k-1}=a_{j-1,k}\label{systemabcd}
\end{eqnarray}
These recursion relations determine $a,b,c,d$ up to initial data.
Assuming these hold, let us finally express:
\begin{eqnarray*}T_{j,k+1} T_{j,k-1}^\bullet&=&C^{-a_{j,k+1}}R_{k+1} C^{b_{j,k+1}-c_{j,k-1}}R_{k-1} C^{d_{j,k-1}}\\
T_{j-1,k} T_{j+1,k}^\bullet&=&C^{-a_{j-1,k}}R_k C^{b_{j-1,k}-c_{j+1,k}}R_k C^{d_{j+1,k}}
\end{eqnarray*}
Noting that $a_{j,k+1}=a_{j-1,k}$, $d_{j+1,k}=d_{j,k-1}$, $b_{j-1,k}-c_{j+1,k}=0$, $b_{j,k+1}-c_{j,k-1}=1$
and $a_{j,k+1}=d_{j,k-1}$ as a consequence of the relations \eqref{systemabcd}, 
we conclude that:
$$
0=T_{j,k+1} T_{j,k-1}^\bullet-T_{j-1,k} T_{j+1,k}^\bullet-1=C^{-a_{j,k+1}}(R_{k+1}CR_{k-1}-R_k^2-1)C^{d_{j,k-1}}
$$
hence \eqref{ncaonet} boils down to  \eqref{kontq} up to conjugation with $C^{a_{j,k+1}}$, and the theorem follows.
\end{proof}

Let us work out explicitly the solution of the system \eqref{systemabcd}. First note that 
\begin{eqnarray*} a_{j,k}&=&a_{j-k,0}=a\left(\frac{j-k}{2}\right),\quad  b_{j,k}=b_{j+k,0}=b\left(\frac{j+k}{2}\right), \\
c_{j,k}&=&c_{j+k,0}=c\left(\frac{j+k}{2}\right), \quad d_{j,k}=d_{j-k,0}=d\left(\frac{j-k}{2}\right),
\end{eqnarray*}
as $j=k$ mod 2.
We conclude that $a(m)=d(m-1)-1$, $b(m)=c(m-1)+1$, $c(m)=b(m-1)$ and $d(m)=a(m-1)$, which is easily solved 
say for trivial initial data $a(0)=b(0)=c(0)=d(0)=0$ as:
$$ a(m)=-\Big\lfloor\frac{m+1}{2} \Big\rfloor , \quad b(m)=\Big\lfloor\frac{m+1}{2} \Big\rfloor, 
\quad c(m)=\Big\lfloor\frac{m}{2} \Big\rfloor,
\quad d(m)=-\Big\lfloor\frac{m}{2} \Big\rfloor $$
so that we finally get
\begin{equation}\label{solabcd}
a_{j,k}=-\lfloor \frac{j-k+2}{4}\rfloor ,\ b_{j,k}=\lfloor \frac{j+k+2}{4}\rfloor, \ c_{j,k}=\lfloor\frac{j+k}{4}\rfloor,\ 
d_{j,k}=-\lfloor \frac{j-k}{4}\rfloor 
\end{equation}

\begin{remark}
We note that with this choice we have the quasi-periodicity conditions:
$T_{j+4,k}=CT_{j,k}C$ and $T_{j+4,k}^\bullet=C^{-1}T_{j,k}^\bullet C^{-1}$. This is to be contrasted with the 
commuting case, for which the solutions of the $A_1$ $T$-system that are $2$-periodic in $j$ 
are the solutions of the $A_1$ $Q$-system.
\end{remark}

\begin{remark}
The conserved quantities of the NC $A_1$ $T$-system reduce respectively to:
$$ \Gamma_{j,k}= C^{-a_{j-1,k+1}}(R_{k+1}R_k^{-1}+R_{k}^{-1}R_{k-1})C^{-b_{j,k}} 
=C^{\lfloor \frac{j-k}{4}\rfloor}KC^{-\lfloor \frac{j-k}{4}\rfloor} $$
where we have used \eqref{solabcd} and the identities: $a_{j-1,k+1}=d_{j,k}$, $a_{j,k}=d_{j+1,k-1}$, 
and $b_{j-1,k+1}=b_{j,k}$ as well as $c_{j,k}=c_{j+1,k-1}$, and:
$$ \Delta_{j,k}= C^{-b_{j,k}} (R_{k}^{-1}R_{k+1} C+C R_{k-1}R_k^{-1}) C^{c_{j,k}}=C^{-\lfloor \frac{j+k+2}{4}\rfloor} K C^{\lfloor \frac{j+k}{4}\rfloor}$$
where we have used \eqref{solabcd} and the identities: $b_{j,k}=c_{j-1,k-1}+1$, $b_{j+1,k+1}=c_{j,k}+1$ as well as $a_{j,k}=a_{j+1,k+1}$
and $d_{j-1,k-1}=d_{j,k}$. We recover the fact that $\Gamma_{j,k}$ is a function of $j-k$ only, while $\Delta_{j,k}$
is a function of $j+k$ only.
\end{remark}


\subsection{The quantum $A_1$ $T$-system}

The quantum $A_1$ $T$-system was defined by use of a cluster algebra formulation of the commuting $A_1$ $T$-system, and
by considering its natural quantum version as provided by a corresponding quantum cluster algebra.
It is defined as follows:

\begin{defn}\label{qtdef}
The quantum $A_1$ $T$-system is the following system of recursion relations:
\begin{equation}\label{qt}q\,\tau_{j,k+1}\tau_{j,k-1}=1+\tau_{j+1,k}\tau_{j-1,k} \qquad (j,k\in \Z; j+k=1\, {\rm mod}\, 2)
\end{equation}
 for non-commuting variables  $\tau_{j,k}$ subject to the following $q$-commuting relations:
\begin{eqnarray} 
\tau_{i,k-1}\tau_{j,k}&=&q^{(-1)^{\lfloor\frac{|i-j|}{2}\rfloor}} \,\tau_{j,k}\tau_{i,k-1} \quad (i,j,k\in \Z;i+k=1\, {\rm mod}\, 2;j+k=0\, {\rm mod}\, 2) \label{qco}\\
\tau_{i,k}\tau_{j,k}&=&\tau_{j,k}\tau_{i,k} \qquad \qquad \qquad  \qquad(i,j,k\in \Z;i+k=j+k=0\, {\rm mod}\, 2)\nonumber
\end{eqnarray}
\end{defn}
 
\begin{thm}
The NC $A_1$ $T$-system of Def.\ref{defaone} reduces to the quantum $T$-system of Def.\ref{qtdef}
for particular choices of the non-commutative variables $T_{j,k}$.
\end{thm}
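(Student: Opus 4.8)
The plan is to mimic the reduction to the NC $A_1$ $Q$-system just carried out, replacing conjugation by powers of the conserved quantity $C$ with multiplication by powers of the \emph{central} element $q$. Concretely, I would take $\mathcal A$ to be a suitable localization of the quantum torus generated by the $\tau_{j,k}$ subject to \eqref{qco}, with $q^{1/2}$ central, and look for a solution of the NC $A_1$ $T$-system of the form
\begin{equation*}
T_{j,k}=q^{\alpha_{j,k}}\,\tau_{j,k},\qquad T_{j,k}^\bullet=q^{\gamma_{j,k}}\,\tau_{j,k}\qquad(j+k=0\ {\rm mod}\ 2),
\end{equation*}
with scalar exponents $\alpha_{j,k},\gamma_{j,k}\in\tfrac12\Z$ to be determined, in parallel with \eqref{chgTQ}.

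First I would impose the two quasi-commutation relations \eqref{quasico}. Since $q$ is central, each side is a monomial $q^{(\cdots)}\tau\,\tau^{-1}$, and rewriting $\tau_{j,k-1}^{-1}\tau_{j+1,k}$ and $\tau_{j-1,k}\tau_{j,k-1}^{-1}$ via \eqref{qco} (the relevant exponent being $(-1)^{\lfloor 1/2\rfloor}=+1$) turns \eqref{quasico} into a pair of linear recursions on $\alpha$ and $\gamma$, exactly analogous to \eqref{systemabcd}. I expect these to force $\alpha_{j,k}$ and $\gamma_{j,k}$ to be affine in $j$ alone, of slope $\tfrac12$; solving them gives, up to an irrelevant overall constant, $\alpha_{j,k}=\tfrac{j}{2}$ and $\gamma_{j,k}=1-\tfrac{j}{2}$.

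Next I would substitute these exponents into the main relation \eqref{ncaonet}. On the left the central prefactor collapses to $q^{\alpha_{j,k+1}+\gamma_{j,k-1}}=q$, while on the right the quadratic term carries $q^{\alpha_{j-1,k}+\gamma_{j+1,k}}=q^{0}=1$; using the equal-level commutativity $\tau_{j-1,k}\tau_{j+1,k}=\tau_{j+1,k}\tau_{j-1,k}$ from \eqref{qco}, the relation \eqref{ncaonet} becomes precisely $q\,\tau_{j,k+1}\tau_{j,k-1}=1+\tau_{j+1,k}\tau_{j-1,k}$, which is \eqref{qt}. Running the computation backwards shows that any solution of the quantum $T$-system yields a solution of the NC system under this substitution, so the two systems are equivalent.

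The one point requiring genuine care — and the step I expect to be the main obstacle — is to check that the prescription $T_{j,k}\mapsto T_{j,k}^\bullet$ actually comes from an involutive anti-automorphism $\bullet$ of $\mathcal A$, as demanded by Definition \ref{defaone}. The naive candidate fixing the generators, $\bullet(\tau_{j,k})=\tau_{j,k}$ and $\bullet(q^{1/2})=q^{-1/2}$, is \emph{inconsistent}: the constant term $1$ present in both \eqref{ncaonet} and \eqref{qt} forces $\alpha_{j,k+1}+\gamma_{j,k-1}=1$, hence $\alpha_{j,k}+\gamma_{j,k}$ must be a nonzero constant, whereas the naive $\bullet$ would impose $\gamma_{j,k}=-\alpha_{j,k}$. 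The resolution is the twisted involution $\bullet(q^{1/2})=q^{-1/2}$, $\bullet(\tau_{j,k})=q\,\tau_{j,k}$; one checks directly that it is compatible with the relations \eqref{qco} (both sides of each relation are sent to $q^{2}\tau_{j,k}\tau_{i,k-1}$), that $\bullet^{2}=\mathrm{id}$, and that indeed $\bullet(q^{j/2}\tau_{j,k})=q^{1-j/2}\tau_{j,k}=T_{j,k}^\bullet$. Identifying this twist is the crux of the argument; once it is in place, the remaining verifications are the routine monomial bookkeeping sketched above.
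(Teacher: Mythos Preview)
Your approach is essentially the paper's: set $T_{j,k}=q^{\alpha_{j,k}}\tau_{j,k}$ and $T_{j,k}^\bullet=q^{\gamma_{j,k}}\tau_{j,k}$, convert \eqref{quasico} and \eqref{ncaonet} into linear constraints on the exponents via the $q$-commutation \eqref{qco}, and exhibit a solution. Your linear choice $\alpha_{j,k}=j/2$, $\gamma_{j,k}=1-j/2$ does satisfy all the constraints and reduces \eqref{ncaonet} to \eqref{qt}, so the argument works.

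Two remarks. First, your sentence ``I expect these to force $\alpha_{j,k}$ and $\gamma_{j,k}$ to be affine in $j$ alone'' is not correct: the constraint system is underdetermined, and the paper in fact exhibits a \emph{quadratic} solution $\alpha_{j,k}=\tfrac{2k+1}{4}-\tfrac{(j-k)^2}{8}$, $\gamma_{j,k}=-\tfrac{2j-1}{4}+\tfrac{(j-k)^2}{8}$. This does not damage your proof --- you only need \emph{one} solution, and yours is simpler --- but you should not claim uniqueness or that affinity is forced. Second, your discussion of the anti-automorphism $\bullet$ goes beyond what the paper does: the paper simply checks that the chosen $T,T^\bullet$ satisfy \eqref{ncaonet}--\eqref{quasico} and leaves the remaining $q$-commutations as ``further requirements'', without explicitly constructing $\bullet$ on the ambient algebra. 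Your twisted involution $\bullet(q^{1/2})=q^{-1/2}$, $\bullet(\tau_{j,k})=q\,\tau_{j,k}$ is a nice addition and is compatible with \eqref{qco}; if you keep it, you should also note why it is consistent with \eqref{qt} (this requires knowing how $\tau_{j,k+1}$ and $\tau_{j,k-1}$ commute, which is not part of \eqref{qco} directly but follows from the cluster structure).
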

\begin{proof}
Setting
$$  T_{j,k}=q^{\al_{j,k}} \tau_{j,k}\quad {\rm and}\quad T_{j,k}^\bullet=q^{-\beta_{j,k}} \tau_{j,k}$$
and noting that
\begin{eqnarray*} T_{j,k-1}^{-1}T_{j+1,k}&=&q^{\al_{j+1,k}-\al_{j,k-1}}\tau_{j,k-1}^{-1}\tau_{j+1,k}\\
T_{j+1,k}^\bullet(T_{j,k-1}^\bullet)^{-1}&=& q^{\beta_{j,k-1}-\beta_{j+1,k}}\tau_{j+1,k}\tau_{j,k-1}^{-1}\\
T_{j-1,k}T_{j,k-1}^{-1}&=&q^{\al_{j,k-1}-\al_{j-1,k}}\tau_{j-1,k}\tau_{j,k-1}^{-1}\\
(T_{j,k-1}^\bullet)^{-1}T_{j-1,k}^\bullet &=&q^{\beta_{j,k-1}-\beta_{j-1,k}}\tau_{j,k-1}^{-1} \tau_{j-1,k}
\end{eqnarray*}
we see that the first set of commutation relations in \eqref{qco} for $i=j-1$ are satisfied iff:
$$ \al_{j+1,k}-\al_{j,k-1}=\beta_{j,k-1}-\beta_{j+1,k}+1,\quad \al_{j,k-1}-\al_{j-1,k}=\beta_{j,k-1}-\beta_{j-1,k}-1$$
while \eqref{qt} is satisfied iff:
$$ \al_{j,k+1}-\beta_{j,k-1}=1,\quad  \al_{j-1,k}-\beta_{j+1,k}=0$$
We deduce that 
\begin{eqnarray*}\al_{j+1,k}+\al_{j-1,k}-\al_{j,k+1}-\al_{j,k-1}&=&0\\
\al_{j-1,k}-\al_{j-1,k+2}+\al_{j,k+1}-\al_{j,k-1}&=&1
\end{eqnarray*}
This system has solutions, and we find in particular that:
$$ \al_{j,k}=\frac{2k+1}{4}-\frac{(j-k)^2}{8} ,\quad \beta_{j,k}= \frac{2j-1}{4}-\frac{(j-k)^2}{8}$$
fulfills all the requirements. The other $q$-commutation relations are simply further requirements on the variables $T,T^\bullet$.
\end{proof}

\section{Solution via flat NC connection}

In this section, we introduce a representation of the NC $T$-system relation via a 2x2 matrix identity with non-commuting entries, 
which can be interpreted as a flat NC connection. This is then used to write a compact expression for the solution of the system
for any initial data.

\subsection{V,U matrices}

\begin{defn}
Let $U,V$ be the following $2\times 2$ matrices with non-commutative entries\footnote{Our notation is slightly 
abusive, as for instance $V(a,b)$ actually depends a priori on the four independent variables $a,b,a^\bullet,b^\bullet$, 
and likewise for $U$. The indication $a,b$ here really stands for $a,b$ and their $\bullet$ images.}:
$$V(a,b)=\begin{pmatrix}a b^{-1} & (b^\bullet)^{-1}\\ 0 & 1\end{pmatrix},\qquad 
U(b,c)=\begin{pmatrix}1 & 0 \\ c^{-1} & b^\bullet (c^\bullet)^{-1}\end{pmatrix}$$
\end{defn}

We have the following:

\begin{lemma}\label{crucial}
If the variables $a,b,c,a^\bullet,b^\bullet,c^\bullet$ are such that 
$$ a b^{-1} = (b^\bullet)^{-1}a^\bullet\quad {\rm and}\quad  b^{-1} c=c^\bullet (b^\bullet)^{-1},  $$
then
$$ V(a,b)\,U(b,c)=U(a,x)\,V(x,c)\qquad {\rm iff}\qquad \left\{ \begin{matrix} x= (b^\bullet)^{-1}+a b^{-1}c \\ 
b=(x^\bullet)^{-1}+ c x^{-1}a \end{matrix}\right. $$
and moreover we have:
$$  a^{-1}\,x = x^\bullet\, (a^\bullet)^{-1} \quad {\rm and} \quad   x \,c^{-1}=(c^\bullet)^{-1}\,x^\bullet$$
\end{lemma}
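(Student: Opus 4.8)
The plan is to treat the two claimed identities as a direct verification, using the explicit form $x=(b^\bullet)^{-1}+ab^{-1}c$ supplied by the equivalence established in the first part of the lemma. First I would record $x^\bullet$: since $\bullet$ is an involutive anti-automorphism it reverses products and satisfies $(y^{-1})^\bullet=(y^\bullet)^{-1}$, so applying it to $x$ gives $x^\bullet=b^{-1}+c^\bullet(b^\bullet)^{-1}a^\bullet$. This is the only place where the algebraic structure of $\bullet$ genuinely enters, and it is worth writing out carefully, since the order reversal is exactly what makes the subsequent term-by-term matching succeed.

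Next I would rewrite the two hypotheses in the precise forms needed. From $ab^{-1}=(b^\bullet)^{-1}a^\bullet$, left-multiplying by $a^{-1}$ and right-multiplying by $(a^\bullet)^{-1}$ yields $a^{-1}(b^\bullet)^{-1}=b^{-1}(a^\bullet)^{-1}$; from $b^{-1}c=c^\bullet(b^\bullet)^{-1}$, taking inverses and rearranging yields $(b^\bullet)^{-1}c^{-1}=(c^\bullet)^{-1}b^{-1}$. These two rearranged identities, together with the hypotheses in their original form, are all the ammunition required.

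For the first claimed relation I would expand $a^{-1}x=a^{-1}(b^\bullet)^{-1}+b^{-1}c$ and $x^\bullet(a^\bullet)^{-1}=b^{-1}(a^\bullet)^{-1}+c^\bullet(b^\bullet)^{-1}$, then match summands: the first terms agree by $a^{-1}(b^\bullet)^{-1}=b^{-1}(a^\bullet)^{-1}$, and the second terms agree by the hypothesis $b^{-1}c=c^\bullet(b^\bullet)^{-1}$. For the second claimed relation I would expand $xc^{-1}=(b^\bullet)^{-1}c^{-1}+ab^{-1}$ and $(c^\bullet)^{-1}x^\bullet=(c^\bullet)^{-1}b^{-1}+(b^\bullet)^{-1}a^\bullet$, and match again: the first terms agree by $(b^\bullet)^{-1}c^{-1}=(c^\bullet)^{-1}b^{-1}$, and the second by the hypothesis $ab^{-1}=(b^\bullet)^{-1}a^\bullet$.

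There is no substantive obstacle here; the entire content is bookkeeping the non-commutative order and selecting, for each of the four summands, which (possibly inverted) form of a hypothesis applies. Conceptually it is worth noting what the computation really shows: the two output relations are precisely the hypothesis-type quasi-commutation relations for the new pairs $(a,x)$ and $(x,c)$, that is, the admissibility conditions attached to the factors $U(a,x)$ and $V(x,c)$. Thus the lemma demonstrates that these admissibility conditions are propagated by the local move $V(a,b)\,U(b,c)=U(a,x)\,V(x,c)$, which is exactly what is needed later to glue the elementary $2\times2$ factors into a globally consistent flat connection along an initial-data path.
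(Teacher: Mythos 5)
Your verification of the ``moreover'' relations is correct and is essentially the paper's own computation: you expand $a^{-1}x$ and $x^\bullet(a^\bullet)^{-1}$ (resp.\ $xc^{-1}$ and $(c^\bullet)^{-1}x^\bullet$) using $x=(b^\bullet)^{-1}+ab^{-1}c$ and its image $x^\bullet=b^{-1}+c^\bullet(b^\bullet)^{-1}a^\bullet$ under the anti-automorphism, and match summands via the two hypotheses in the rearranged forms $a^{-1}(b^\bullet)^{-1}=b^{-1}(a^\bullet)^{-1}$ and $(b^\bullet)^{-1}c^{-1}=(c^\bullet)^{-1}b^{-1}$. Your closing remark that these output relations are exactly the admissibility relations for the new pairs $(a,x)$ and $(x,c)$, so that the local move propagates them along the path, is accurate and is indeed the point of the lemma.

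However, there is a genuine gap: you never prove the first assertion of the lemma, namely the equivalence
$V(a,b)\,U(b,c)=U(a,x)\,V(x,c)$ if and only if $x=(b^\bullet)^{-1}+ab^{-1}c$ and $b=(x^\bullet)^{-1}+cx^{-1}a$. You invoke it as ``the equivalence established in the first part of the lemma,'' but that equivalence is part of the statement to be proved, not a hypothesis. Its proof is not a formality: one must multiply out both $2\times 2$ products, observe that the $(1,2)$ and $(2,1)$ entries agree automatically (using $(b^\bullet)^{-1}b^\bullet(c^\bullet)^{-1}=(c^\bullet)^{-1}$), read off from the $(1,1)$ entries the condition $ab^{-1}+(b^\bullet)^{-1}c^{-1}=xc^{-1}$, i.e.\ $x=(b^\bullet)^{-1}+ab^{-1}c$, and from the $(2,2)$ entries the condition $b^\bullet=x^{-1}+a^\bullet(x^\bullet)^{-1}c^\bullet$. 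The latter is not literally the second displayed equation; the paper obtains $b=(x^\bullet)^{-1}+cx^{-1}a$ from it by applying the involutive anti-automorphism $\bullet$. That conversion step, together with the matrix computation itself, is missing from your argument and must be supplied for the proof to be complete.
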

\begin{proof}
We compute:
\begin{eqnarray*} V(a,b)\,U(b,c)&=&\begin{pmatrix} a b^{-1}+(b^\bullet)^{-1}c^{-1} &  
(c^\bullet)^{-1} \\ c^{-1} & b^\bullet (c^\bullet)^{-1}\end{pmatrix} \\
U(a,x)\,V(x,c)&=&\begin{pmatrix} x c^{-1} &  
(c^\bullet)^{-1} \\ c^{-1} & a^\bullet (x^\bullet)^{-1}+x^{-1}(c^\bullet)^{-1}  \end{pmatrix} 
\end{eqnarray*}
The identity between these is equivalent to the system:
$$x= (b^\bullet)^{-1}+a b^{-1}c \quad {\rm and} \quad  b^\bullet=x^{-1}+ a^\bullet (x^\bullet)^{-1}c^\bullet $$
and we get the first statement of the lemma by taking the $\bullet$ of the second equation. For the second statement, note that:
$$ a^{-1}x= a^{-1}(b^\bullet)^{-1}+b^{-1}c=b^{-1}(a^\bullet)^{-1}+c^\bullet (b^\bullet)^{-1}=x^\bullet (a^\bullet)^{-1} $$
where we have used $a^{-1}(b^\bullet)^{-1}=b^{-1}(a^\bullet)^{-1}$ and $b^{-1}c=c^\bullet (b^\bullet)^{-1}$. Analogously,
$$ x c^{-1}=(b^\bullet)^{-1}c^{-1}+a b^{-1}=(c^\bullet)^{-1} b^{-1}+(b^\bullet)^{-1}a^\bullet=(c^\bullet)^{-1}x^\bullet$$
and the lemma follows.
\end{proof}

The equations of the Lemma above may be conveniently rewritten in a non-commutative polynomial form.
We have:
\begin{lemma}
Given the non-commuting variables $a,b,c,a^\bullet,b^\bullet,c^\bullet$ subject to the relations
$a b^{-1} = (b^\bullet)^{-1}a^\bullet$ and $b^{-1} c=c^\bullet (b^\bullet)^{-1}$, the system:
\begin{equation}\label{nctsystwo} \left\{ \begin{matrix} x= (b^\bullet)^{-1}+a b^{-1}c \\ 
b=(x^\bullet)^{-1}+ c x^{-1}a \end{matrix}\right.\end{equation}
is equivalent to the single equation:
\begin{equation}\label{singlenctsys}  x \, b^\bullet= 1+a \, c^\bullet \end{equation}
The latter also implies the relations 
\begin{equation}\label{ncomm}  a^{-1}\,x = x^\bullet\, (a^\bullet)^{-1} \quad {\rm and} \quad   x \,c^{-1}=(c^\bullet)^{-1}\,x^\bullet
\end{equation}
\end{lemma}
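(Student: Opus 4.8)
The plan is to show that the single equation \eqref{singlenctsys} is in fact equivalent to just the \emph{first} equation of the system \eqref{nctsystwo}, and that the second equation together with the relations \eqref{ncomm} are then automatically forced. The content is thus a repackaging of Lemma \ref{crucial} into non-commutative polynomial form, with the involutive anti-automorphism $\bullet$ carrying most of the bookkeeping. The equivalence ``system $\iff$ single equation'' then splits trivially in one direction (if both equations hold, the first one holds, hence so does \eqref{singlenctsys}) and requires work only in the converse direction.

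First I would establish the equivalence of \eqref{singlenctsys} with the first equation $x = (b^\bullet)^{-1} + ab^{-1}c$. Right-multiplying that equation by $b^\bullet$ and invoking the standing hypothesis $b^{-1} c = c^\bullet (b^\bullet)^{-1}$ (so that $b^{-1}cb^\bullet = c^\bullet$) collapses the term $ab^{-1}cb^\bullet$ to $ac^\bullet$, yielding $x\,b^\bullet = 1 + a\,c^\bullet$; conversely, right-multiplying \eqref{singlenctsys} by $(b^\bullet)^{-1}$ and using the same hypothesis recovers the first equation. So these two statements are interchangeable given the relations on $a,b,c$.

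Next I would derive the relations \eqref{ncomm} directly from the first equation, in exactly the manner of the proof of Lemma \ref{crucial}. Substituting $x = (b^\bullet)^{-1} + ab^{-1}c$ into $a^{-1}x$ and into $xc^{-1}$, I would use $a^{-1}(b^\bullet)^{-1} = b^{-1}(a^\bullet)^{-1}$ (an inverse rearrangement of $ab^{-1} = (b^\bullet)^{-1}a^\bullet$) and $(b^\bullet)^{-1}c^{-1} = (c^\bullet)^{-1}b^{-1}$ (similarly obtained from $b^{-1}c = c^\bullet(b^\bullet)^{-1}$), then match the outcome against $x^\bullet = b^{-1} + c^\bullet(b^\bullet)^{-1}a^\bullet$, which is what $\bullet$ returns when applied to the first equation. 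This produces both $a^{-1}x = x^\bullet(a^\bullet)^{-1}$ and $xc^{-1} = (c^\bullet)^{-1}x^\bullet$.

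Finally, to recover the second equation of \eqref{nctsystwo}, I would apply the involutive anti-automorphism $\bullet$ to \eqref{singlenctsys}, which reverses products and gives $b\,x^\bullet = 1 + c\,a^\bullet$; right-multiplying by $(x^\bullet)^{-1}$ yields $b = (x^\bullet)^{-1} + c\,a^\bullet(x^\bullet)^{-1}$, and the factor $a^\bullet(x^\bullet)^{-1}$ equals $x^{-1}a$ upon inverting the relation $a^{-1}x = x^\bullet(a^\bullet)^{-1}$ just established. I expect the main obstacle to be purely one of careful bookkeeping: tracking the order reversal under $\bullet$ and selecting, at each step, the correct inverse-rearrangement of the two hypotheses so that the cross terms cancel exactly. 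There is no deeper difficulty here, since once the relations \eqref{ncomm} are in hand the remaining algebra is forced.
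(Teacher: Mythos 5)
Your proposal is correct and follows essentially the same route as the paper: right-multiplication by $b^\bullet$ to pass between the first equation and \eqref{singlenctsys}, derivation of \eqref{ncomm} by the same substitution-and-rearrangement used for Lemma \ref{crucial}, and recovery of the second equation by applying $\bullet$ and multiplying by $(x^\bullet)^{-1}$ with the aid of \eqref{ncomm}. The only (immaterial) difference is that you apply $\bullet$ directly to \eqref{singlenctsys} and then right-multiply, whereas the paper first left-multiplies the expression for $x$ by $b^\bullet$ and then applies $\bullet$ and left-multiplies; the two computations coincide up to trivial reordering.
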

\begin{proof}
Multiplying the first equation of the system \eqref{nctsystwo} on the right by $b^\bullet$, and using $b^{-1} c=c^\bullet (b^\bullet)^{-1}$
gives eq.\eqref{singlenctsys}. Conversely, the latter implies that $x=(b^\bullet)^{-1}+a b^{-1}c$ and the relations \eqref{ncomm} follow. 
Now multiplying the latter on the left
by $b^\bullet$ yields $b^\bullet\, x=1+a^\bullet\, c$ by use of $a b^{-1} = (b^\bullet)^{-1}a^\bullet$. 
This gives $x^\bullet\, b=1+c^\bullet \, a$, upon applying $\bullet$. 
Finally, multiplying this on the left by $(x^\bullet)^{-1} $  yields the second equation of the system \eqref{nctsystwo}, by use of 
$ (x^\bullet)^{-1} c^\bullet =c x^{-1}$, and the lemma follows.
\end{proof}

\subsection{Solution and NC positive Laurent property}

Let us consider the NC $A_1$ $T$-system with initial data $(\bm,x_\bm,x_\bm^\bullet)$. 
As explained above, without loss of generality, we may restrict ourselves to
finding an expression for $T_{j,k}$ with $k\geq m_j$, namely above the boundary path.

\begin{defn}
Let $j_0,j_1\in \Z$ to be the lower and upper projections of $(j,k)$ onto the path $\bm$ 
defined as follows. $j_0$ is the largest integer $\ell$ such that $k-j=m_\ell-\ell$. $j_1$ is the smallest integer $\ell$
such that $k+j=\ell+m_\ell$. The section of $\bm$ between $j_0$ and $j_1$ is also called the projection of $(j,k)$
onto $\bm$.
\end{defn}

\begin{defn}
For $x,y\in \Z$, $x\leq y$, we define the $2\times 2$ matrix $M_\bm(x,y)$ as follows. We consider the section of $\bm$ between $x$ and $y$
namely $\{m_j\}_{j\in [x,y]}$. From left to right it is a succession of up (if $m_{j+1}=m_j+1$) and down (if $m_{j+1}=m_j-1$) steps.
Let $M_\bm(x,x)={\mathbb I}$ the $2\times 2$ identity matrix. We define $M_\bm(x,j)$, $j\geq x$ by induction as:
$$M_\bm(x,j+1)=M_\bm(x,j)\times\left\{ 
\begin{matrix} U(t_j,t_{j+1}) & {\rm if} \, m_{j+1}=m_j+1 \\ V(t_j,t_{j+1}) & {\rm if} \, m_{j+1}=m_j-1 \end{matrix} \right. $$
In other words $M_\bm(x,y)$ is the product of $U,V$ matrices along the section of $\bm$ between $x$ and $y$, 
with $U$ for up steps, and $V$ for down steps.
\end{defn}

The strategy for solving the NC $A_1$ $T$-system is as follows. We start from the situation when the point $(j,k)$ belongs to 
the initial data path $\bm$, in which case the solution $T_{j,k}$ is trivially $t_j$, the initial data assignment. Next we ``mutate"
the initial data by local moves consisting of replacing a succession of down-up steps with a succession of up-down steps. 
This in turn corresponds to one backward application of the matrix identity of Lemma \ref{crucial} in which the old variables $x,x^\bullet$
must then be replaced by $b,b^\bullet$ on the new (backward mutated) path. This however conserves the value of the matrix product,
allowing us to write a general formula for the solution that is independent of the path. We have the following:

\begin{thm}\label{solution}
The solution $T_{j,k}$ of the NC $A_1$ $T$-system with initial data $(\bm,x_\bm,x_\bm^\bullet)$
is given by
\begin{equation}\label{solt}
T_{j,k}=\left( M_\bm(j_0,j_1) \right)_{1,1} \, t_{j_1}
\end{equation}
\end{thm}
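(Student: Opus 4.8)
The plan is to prove \eqref{solt} by induction, deforming the given initial path $\bm$ upward by elementary mutations until it passes through the target point $(j,k)$, while showing that the element $N(\bm):=\left( M_\bm(j_0,j_1)\right)_{1,1}\,t_{j_1}$ is unchanged at every step. By Lemma~\ref{reflec} it suffices to treat $k\ge m_j$; the points with $k=m_j$ are precisely the assignments \eqref{ncass}, so I may assume $(j,k)$ is strictly above $\bm$, in which case the two downward diagonals from $(j,k)$ meet $\bm$ in the ordered projections $j_0\le j_1$, and the section of $\bm$ between them is a valley sitting below $(j,k)$.

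\emph{Base case.} I would first treat a path passing through $(j,k)$ with a local maximum at $j$, i.e. $m_j=k$ and $m_{j\pm1}=k-1$. A direct check against the definitions of $j_0,j_1$ shows that both projections collapse to $j$, whence $M_\bm(j,j)={\mathbb I}$ and $N(\bm)=t_j=T_{j,k}$, the latter being the initial assignment \eqref{ncass} at the point $(j,m_j)=(j,k)$.

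\emph{Invariance under mutation.} The heart of the argument is to show $N(\bm)=N(\bm')$ when $\bm'=\mu_\ell^{+}(\bm)$ is a raising mutation at a local minimum $\ell$ of $\bm$ lying in the down-cone of $(j,k)$, that is, a local move replacing a down--up pair by an up--down pair. On the matrix level this replaces a consecutive factor $V(t_{\ell-1},t_\ell)\,U(t_\ell,t_{\ell+1})$ in $M_\bm$ by $U(t_{\ell-1},x)\,V(x,t_{\ell+1})$, where $x=t'_\ell=T_{\ell,m_\ell+2}=(t_\ell^\bullet)^{-1}+t_{\ell-1}t_\ell^{-1}t_{\ell+1}$ is the mutated value obtained from \eqref{ncaonet}. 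By Lemma~\ref{crucial} these two matrix products are \emph{equal}, and the ``moreover'' clause of that lemma guarantees that $x,x^\bullet$ again satisfy the nearest–neighbour relations \eqref{quasicoinit}, so $(\bm',x_{\bm'},x_{\bm'}^\bullet)$ stays admissible and successive mutations remain within the hypotheses of Lemma~\ref{crucial}. When $\ell$ is interior to $[j_0,j_1]$ the projections do not move and $M_{\bm'}(j_0,j_1)=M_\bm(j_0,j_1)$ outright, so $N(\bm)=N(\bm')$. When $\ell$ is adjacent to an endpoint the segment shrinks: at the left one gets $M_\bm(j_0,j_1)=U(t_{j_0},x)\,M_{\bm'}(j_0+1,j_1)$, and since $U$ is lower triangular with $(1,1)$-entry $1$ this extra factor does not affect the $(1,1)$-entry, so $N$ is preserved; at the right one gets $M_\bm(j_0,j_1)=M_{\bm'}(j_0,j_1-1)\,V(x',t_{j_1})$ with $x'=t'_{j_1-1}$, and the explicit upper-triangular form of $V$ together with the replacement $t_{j_1}\mapsto x'$ in $N(\bm')$ makes the trailing $t_{j_1}^{-1}t_{j_1}$ telescope, again leaving $N$ fixed. (The length-two valley, where both endpoints move simultaneously, is just the composition of these two computations.)

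\emph{Termination and conclusion.} Finally I would invoke the standard combinatorial fact that any admissible path with $(j,k)$ strictly above it can be brought, by finitely many such raisings at sites in the down-cone, to a path with a local maximum of height $k$ at $j$: a nonnegative area functional measuring the gap between the path and the down-cone of $(j,k)$ strictly decreases at each raising, forcing termination exactly at the base configuration. Since every mutation leaves $N$ invariant and realizes one instance of the $T$-system relation \eqref{ncaonet}, chaining the equalities gives $N(\bm)=N(\tilde\bm)=T_{j,k}$ for the terminal path $\tilde\bm$, which is \eqref{solt}. I expect the genuine obstacle to be the endpoint bookkeeping in the invariance step: there the bare matrix identity of Lemma~\ref{crucial} is not enough, and one must use the precise triangular shapes of $U$ and $V$ to see that it is the specific scalar combination ``$(1,1)$-entry times $t_{j_1}$'' — rather than the full matrix $M_\bm(j_0,j_1)$ — that is conserved across a change of projection.
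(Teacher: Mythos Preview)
Your argument is correct and rests on the same engine as the paper's proof --- the $VU\leftrightarrow UV$ identity of Lemma~\ref{crucial} --- but you run it in the opposite direction and with a different base case. The paper fixes the endpoints $j_0,j_1$ once and for all (as determined by the target $(j,k)$ and the given path $\bm$), takes as base case the \emph{maximal} path $\bp$ on $[j_0,j_1]$ (all $U$'s then all $V$'s, peaking at $(j,k)$), computes $(M_\bp)_{1,1}\,t_{j_1}=T_{j,k}$ directly via ${\mathcal U}_{1,1}=1$ and the telescopic ${\mathcal V}_{1,1}$, and then peels $\bp$ down to $\bm$ by backward uses of Lemma~\ref{crucial}, which preserve the \emph{full} matrix $M(j_0,j_1)$; no endpoint analysis is ever needed. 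You instead raise $\bm$ up to a local peak at $(j,k)$, taking as base case the trivial $j_0=j_1=j$, $M=\mathbb I$, and must then track how the scalar $N(\bm)=(M_\bm)_{1,1}\,t_{j_1}$ survives the shrinking of $[j_0,j_1]$; your handling of the left/right endpoint cases via the triangular shapes of $U,V$ is exactly right (indeed it is essentially the same telescoping the paper does once, at the base case, spread out over the induction). What the paper's route buys is economy: a single clean computation replaces your case analysis, and the invariant is the whole $2\times2$ matrix rather than a particular entry-times-scalar. What your route buys is a more elementary base case and an argument that makes explicit why the specific combination ``$(1,1)$-entry times $t_{j_1}$'' is the right observable.
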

\begin{proof}
By induction on the shape of $\bm$. It is clear from the definitions that the projection of $(j,k)$ onto $\bm$ should start with a down step
and end with an up step. Let us first consider the case of a ``maximal" path section $\bp$ between $j_0$ and $j_1$ made of a succession
of up steps followed by a succession of down steps, and an associated assignment of initial values $\{\theta_i,\theta_i^\bullet\}_{i\in [j_0,j_1]}$
where $\theta_i=T_{i,p_i}$ is the solution of the system at the point $(i,p_i)$. 
The corresponding matrix $M_\bp(j_0,j_1)$ is a (lower triangular) product $\mathcal U$
of $U$ matrices 
followed by a (upper triangular) product $\mathcal V$ of $V$ matrices. As a consequence, $\left(M_\bp(j_0,j_1)\right)_{1,1}={\mathcal U}_{1,1}
{\mathcal V}_{1,1}$. We easily compute ${\mathcal U}_{1,1}=1$ as all $(1,1)$ elements of $U$ matrices are equal to $1$.
We also compute:
$${\mathcal V}_{1,1} =\prod_{i=j}^{j_1-1} V(\theta_i,\theta_{i+1})_{1,1}=\theta_j \theta_{j_1}^{-1}$$
as the product is telescopic. Noting that $\theta_{j_1}=t_{j_1}$, and $\theta_j=T_{j,k}$, this finally gives 
$$ \left( M_\bp(j_0,j_1) \right)_{1,1} \, t_{j_1}=\theta_j=T_{j,k} $$
so the formula \eqref{solt} holds for $\bm$ replaced by the maximal section of path $\bp$. We may now ``peel" $\bp$ by successive
mutations so as to reach $\bm$ in finitely many steps. Each such step involves substituting a product of the form $UV$  with a product $V'U'$
by backward use of the identity of Lemma \ref{crucial}. As each such substitution preserves the value of the matrix $M(j_0,j_1)$,
we finally get $M_\bm(j_0,j_1)=M_\bp(j_0,j_1)$, and the theorem follows.
\end{proof}

\begin{remark}
Note that Theorem \ref{solution} immediately implies Theorem \ref{main}, as the non-zero entries of the $U,V$ matrices are all
Laurent monomials of the variables $t_j$ and $t_j^\bullet$, with coefficient $1$, and the entries of the matrix
$M_\bm(j_0,j_1)$ are therefore Laurent polynomials of $\{t_j,t_j^\bullet\}$ with non-negative integer coefficients.
\end{remark}

\begin{remark}\label{wello}
Note that the solution given by Theorem \ref{solution} produces for $T_{j,k}$ a Laurent polynomial in which each contributing monomial
is well-ordered, namely indices of $t_i^{\pm1}$ and $(t_i^\bullet)^{\pm 1}$ are strictly increasing from left to right. This form is unique, as
applications of the relations \eqref{quasicoinit} on the initial data would change the ordering.
\end{remark}

\begin{example}\label{flatex}
Let us consider the case of the ``flat" initial data path $\bm^{(0)}$, with $m_{i}^{(0)}=i$ mod 2, and with the assignments
$T_{i,i\, {\rm mod}\, 2}=t_i$, $i\in \Z$. Then we have:
\begin{eqnarray}T_{3,3}&=& \left( V(t_1,t_2)U(t_2,t_3)V(t_3,t_4)U(t_4,t_5)\right)_{1,1}\, t_5 \nonumber \\
&=&\left( \begin{pmatrix} t_1 t_2^{-1}+(t_2^\bullet)^{-1}t_3^{-1} &  
(t_3^\bullet)^{-1} \\ t_3^{-1} & t_2^\bullet (t_3^\bullet)^{-1}\end{pmatrix}.
\begin{pmatrix} t_3 t_4^{-1}+(t_4^\bullet)^{-1}t_5^{-1} &  
(t_5^\bullet)^{-1} \\ t_5^{-1} & t_4^\bullet (t_5^\bullet)^{-1}\end{pmatrix} \right)_{1,1}\, t_5 \nonumber \\
&=&(t_1 t_2^{-1}+(t_2^\bullet)^{-1}t_3^{-1})(t_3 t_4^{-1}t_5+(t_4^\bullet)^{-1})+(t_3^\bullet)^{-1}  \nonumber \\
&=&t_1 t_2^{-1}t_3 t_4^{-1}t_5+t_1 t_2^{-1}(t_4^\bullet)^{-1}+ (t_2^\bullet)^{-1}t_4^{-1}t_5+ (t_2^\bullet)^{-1}t_3^{-1}(t_4^\bullet)^{-1}
+(t_3^\bullet)^{-1} \label{t33ex}
\end{eqnarray}
Note the ordering of the terms in each monomial, a manifestation of the well-ordered structure of Remark \ref{wello}.
\end{example}

\subsection{NC Networks}

In this section, we express the solution to \eqref{ncaonet} in terms of any admissible data
as a non-commutative network partition function, namely as the partition function of paths
with non-commutative weights on a suitably defined oriented graph.

In the classical and quantum cases, the matrices $U,V$ have been interpreted as transfer matrices (or ``chips") for
paths on weighted graphs. In the NC case, the interpretation is the same, but the paths now receive NC step weights,
to be multiplied in the same order as the corresponding steps. More precisely, to each matrix $V_{a,b}$, $U_{a,b}$ we
associate an oriented graph with two entry connectors labeled $1,2$ on the left and two exit connectors $1,2$ on the right, and
an oriented step $i\to j$ whenever the corresponding matrix element $(i,j)$ is non-zero. We attach the value of this matrix element to the corresponding oriented step on the graph. This produces two weighted oriented graphs as follows (all edges are oriented from left to right):
\begin{eqnarray}\label{udnet}
&&\epsfxsize=10cm \epsfbox{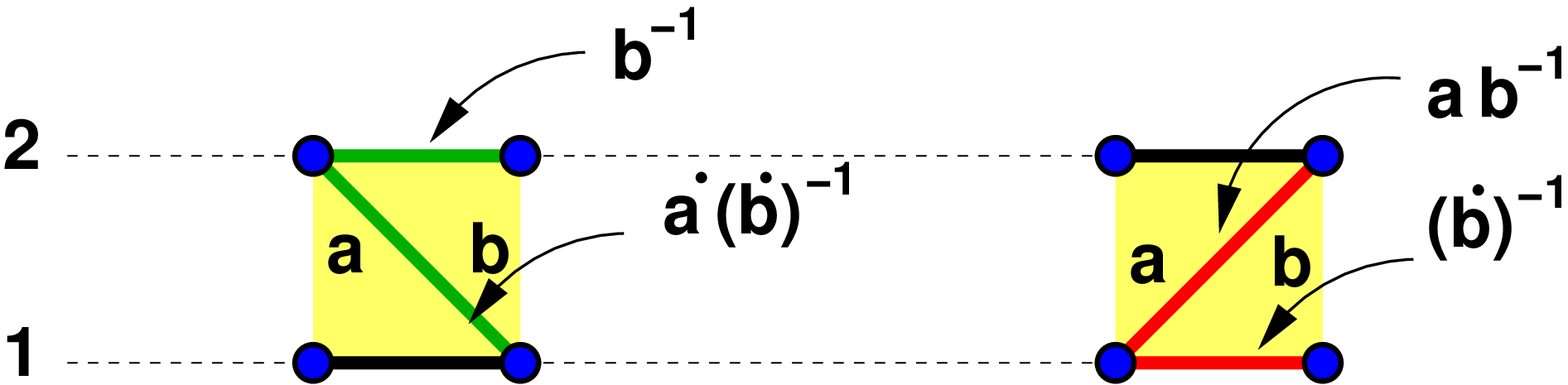} \nonumber \\
&& \qquad  \qquad  \quad   U(a,b) \qquad  \qquad  \qquad \qquad \quad \ 
V(a,b)
\end{eqnarray}
where we have indicated the non-trivial step weights as NC Laurent monomials of $a,b,a^\bullet,b^\bullet$.
Note that the variables $a,b$ become face labels in the pictorial representation.

A NC network is the graph obtained by the concatenation of such chips,
forming a chain where the exit connectors $1,2$ of each chip in the
chain are identified with the entry connectors of the next chip in the
chain, while face labels are well-defined. The latter condition
imposes that $U$ and $V$ parameters themselves form a chain
$a_1,a_2,...$, for instance the matrix:
\begin{equation}\label{duex}
W=U(a_1,a_2)U(a_2,a_3)V(a_3,a_4)U(a_4,a_5)V(a_5,a_6)
\end{equation}
corresponds to the network:
$$
\epsfxsize=7cm \epsfbox{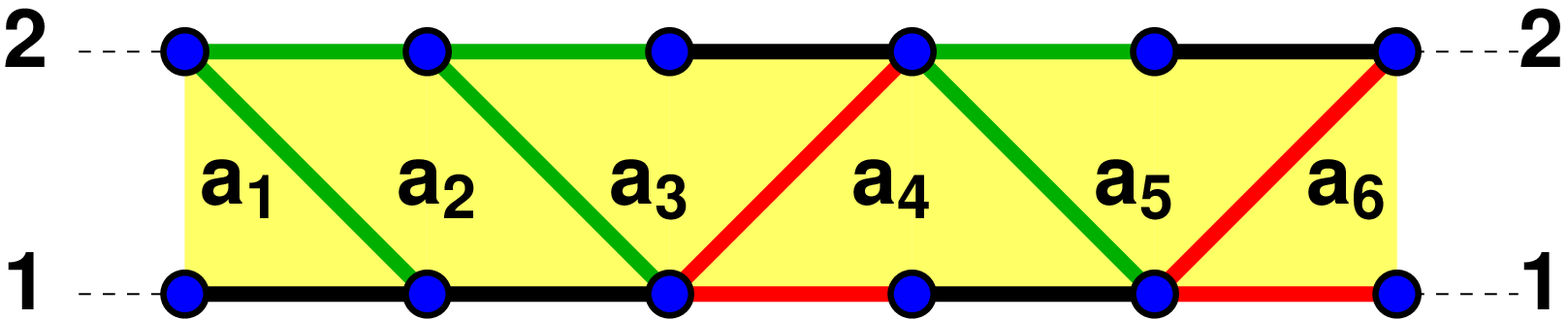}
$$
The partition function of a NC network with weighted adjacency matrix $W$,
with entry connector $i$ and exit connector $j$ is the matrix element $W_{i,j}$.  It
is the sum over paths from entry $i$ to exit $j$ of the product of
edges weights, taken in the order they are traversed.

We are now in position to clarify the interpretation of the non-negative integer coefficients in the positive NC 
Laurent property of the solutions of the NC $A_1$ $T$-system, expressed in terms of some arbitrary initial data.
To each matrix $M_\bm(j_0,j_1)$ we may associate a NC network
${\mathcal N}_\bm(j_0,j_1)$ made of the $j_1-j_0$
concatenated  ($U$/$V$)  chips corresponding to the (up/down) step succession in the relevant section of $\bm$. 
In particular, the entry $(1,1)$ of this matrix is interpreted as the partition function for paths of $j_1-j_0$
steps from entry point $1$ to exit point $1$ on the network graph ${\mathcal N}_\bm(j_0,j_1)$, namely the sum over all paths
weighted by the product of their step weights in the order in which they are taken. This is summarized in the following:

\begin{thm}\label{netsolthm}
The solution $T_{j,k}$ of the NC $A_1$ $T$-system with initial data $(\bm,x_\bm,x_\bm^\bullet)$ is the partition function
for paths from entry connector $1$ to exit connector $1$ on the NC network ${\mathcal N}_\bm(j_0,j_1)$ associated to the initial data,
multiplied by $t_{j_1}$.
\end{thm}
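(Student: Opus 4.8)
The plan is to argue that this theorem is essentially a restatement of Theorem \ref{solution} together with the dictionary between matrices and networks set up in equation \eqref{udnet}. The content of Theorem \ref{solution} is the identity $T_{j,k}=\left(M_\bm(j_0,j_1)\right)_{1,1}\,t_{j_1}$, so the only thing that genuinely needs proving here is that the $(1,1)$ matrix element of the product $M_\bm(j_0,j_1)=\prod U/V$ coincides, as a non-commutative expression, with the path partition function on the network ${\mathcal N}_\bm(j_0,j_1)$ from entry connector $1$ to exit connector $1$. Everything else follows by multiplying by $t_{j_1}$.

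First I would recall the chip-to-matrix dictionary precisely: each $U(a,b)$ and each $V(a,b)$ is a $2\times 2$ matrix whose $(i,j)$ entry is attached as a weight to the oriented edge from entry connector $i$ to exit connector $j$ of the corresponding chip (and no edge is drawn when the entry is zero). With this convention, the defining property is that for a single chip with weighted adjacency matrix $W$, the matrix element $W_{i,j}$ is by construction the sum of edge weights over oriented steps $i\to j$, which is exactly the one-step partition function on that chip. The key step is then the multiplicativity of path partition functions under concatenation of networks. When two chips with matrices $W^{(1)},W^{(2)}$ are concatenated, identifying exit connectors of the first with entry connectors of the second, a path from entry $i$ to exit $j$ through the composite network factors uniquely as a path from $i$ to some intermediate connector $\ell$ in the first chip followed by a path from $\ell$ to $j$ in the second, and its weight is the product of the two sub-path weights \emph{taken in the order traversed}, i.e. the first-chip weight on the left and the second-chip weight on the right. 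Summing over the intermediate connector $\ell$ is precisely the non-commutative matrix multiplication $\left(W^{(1)}W^{(2)}\right)_{i,j}=\sum_\ell W^{(1)}_{i,\ell}W^{(2)}_{\ell,j}$, with factors in the correct left-to-right order.

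The main step is therefore to make this order-preservation watertight: because the weights live in the non-commutative algebra $\mathcal A$, one must verify that matrix multiplication accumulates the step weights in exactly the order in which the steps are traversed along the path, left chip before right chip. This is exactly matched by the convention that the left matrix factor corresponds to the earlier (leftmost) chip in the chain, so no reordering is introduced. Iterating this concatenation argument over all $j_1-j_0$ chips of ${\mathcal N}_\bm(j_0,j_1)$ — one $U$ for each up step and one $V$ for each down step in the section of $\bm$ between $j_0$ and $j_1$, exactly as in the definition of $M_\bm(j_0,j_1)$ — shows that $\left(M_\bm(j_0,j_1)\right)_{1,1}$ equals the sum over all directed paths from entry connector $1$ to exit connector $1$ of the ordered products of their step weights, which is by definition the network partition function.

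The one point requiring a little care, and the step I expect to be the only real obstacle, is the order-of-multiplication bookkeeping: in the commuting or quantum cases this is automatic, but here one must check that the associativity of the $(U/V)$-matrix product and the convention of Lemma \ref{crucial} are compatible with reading path weights left-to-right. Once this is confirmed, combining the partition-function identity $\left(M_\bm(j_0,j_1)\right)_{1,1}=Z_{1\to 1}\!\left({\mathcal N}_\bm(j_0,j_1)\right)$ with Theorem \ref{solution} yields $T_{j,k}=Z_{1\to 1}\!\left({\mathcal N}_\bm(j_0,j_1)\right)\,t_{j_1}$, which is the assertion of the theorem.
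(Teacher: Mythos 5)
Your proposal is correct and follows essentially the same route as the paper, which treats Theorem \ref{netsolthm} as an immediate consequence of Theorem \ref{solution} together with the observation (made in the paragraph preceding the theorem) that the $(1,1)$ entry of the chip product $M_\bm(j_0,j_1)$ is, by the definition of the weighted adjacency matrices and of non-commutative matrix multiplication, the ordered-weight path partition function on ${\mathcal N}_\bm(j_0,j_1)$. Your explicit check that concatenation of chips matches left-to-right matrix multiplication without reordering is exactly the (implicit) content of the paper's argument, just spelled out in more detail.
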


\begin{example}\label{flapathex}
Let us consider again the case of the ``flat" initial data path $\bm^{(0)}$ of Example \ref{flatex}. The expression \eqref{t33ex}
for $T_{3,3}$ is the sum of five monomials. These are interpreted as the five paths of 4 steps, from connector 1 to connector 1,
on the network ${\mathcal N}_{\bm^{(0)}}(1,5)$:
$$ {\mathcal N}_{\bm^{(0)}}(1,5)= \raise -1.5cm  \hbox{\epsfxsize=5cm \epsfbox{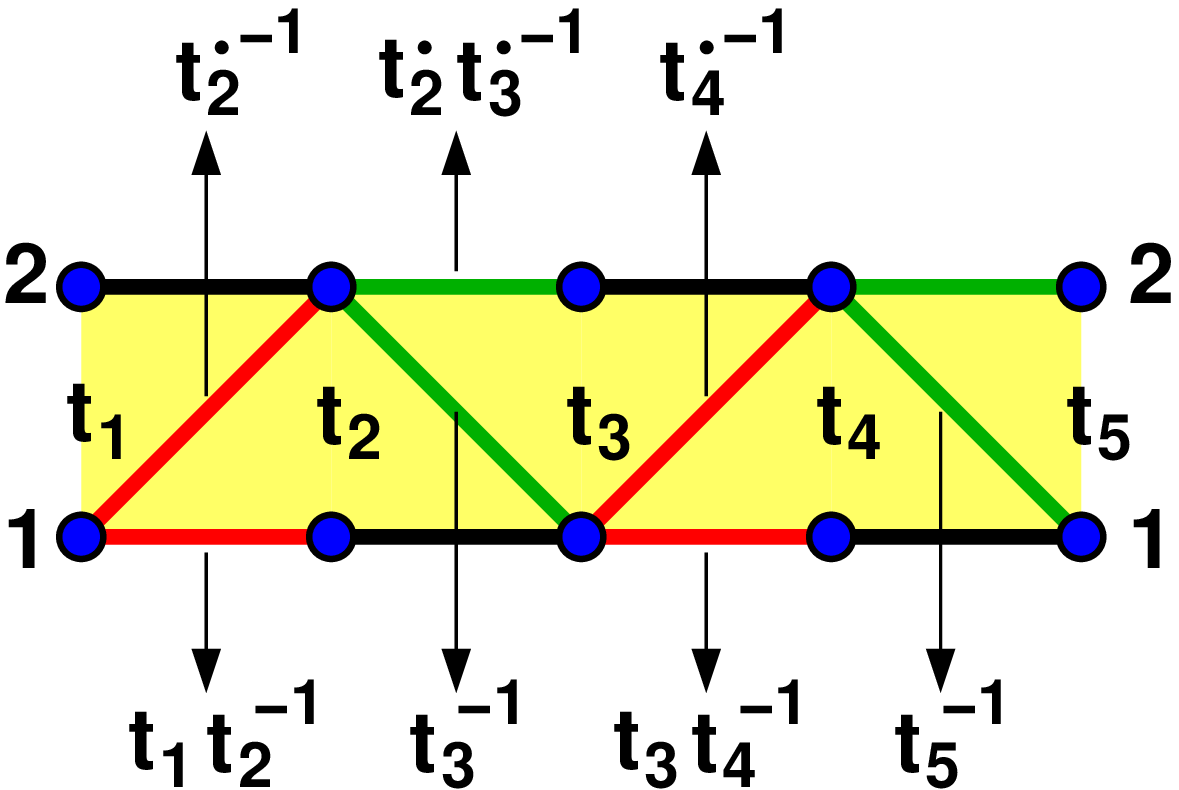} }$$
where we have indicated the non-trivial step weights. The five paths from entry 1 on the left to exit 1 on the right are:
$$  \begin{matrix} {\rm path:} 
&\hbox{\epsfxsize=2.cm \epsfbox{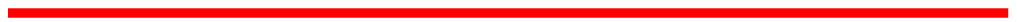} }
& \hbox{\epsfxsize=2.cm \epsfbox{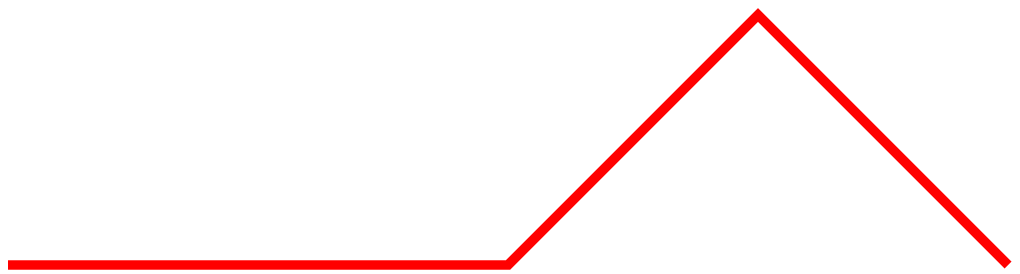} } 
&\hbox{\epsfxsize=2.cm \epsfbox{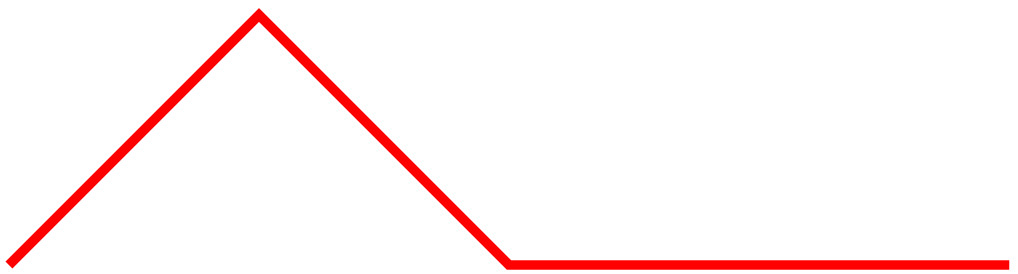} }
&\hbox{\epsfxsize=2.cm \epsfbox{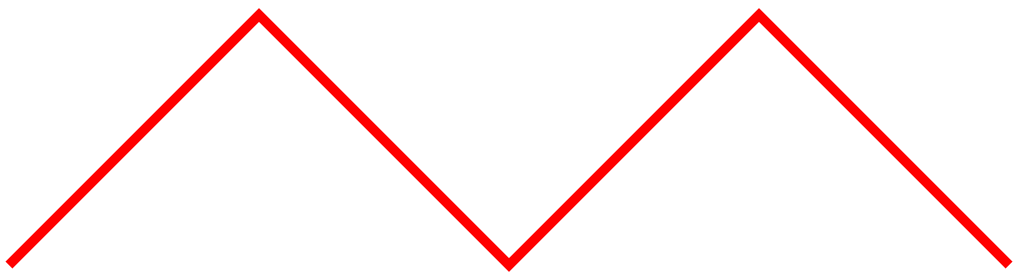} }  
& \hbox{\epsfxsize=2.cm \epsfbox{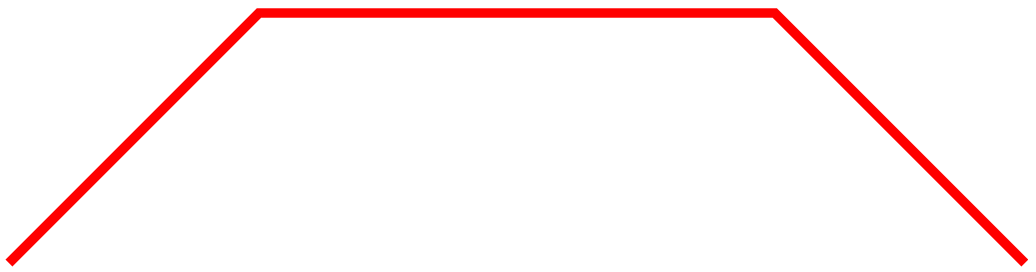} } 
\\
& & & & & \\
{\rm weight:} & t_1t_2^{-1}t_3t_4^{-1}t_5 &t_1t_2^{-1} (t_4^\bullet)^{-1} &(t_2^\bullet)^{-1}t_4^{-1}t_5 
&(t_2^\bullet)^{-1}t_3^{-1} (t_4^\bullet)^{-1}&(t_3^\bullet)^{-1} \end{matrix} $$
where we have also represented their monomial contributions to $ T_{3,3}=\left({\mathcal M}_{\bm^{(0)}}(1,5)\right)_{1,1}t_5$ of \eqref{t33ex}.
\end{example}


\subsection{NC dimers}

Another interpretation of the solution $T_{j,k}$  holds in terms of a dimer partition function on a suitable 4-6 ladder graph,
entirely determined by the initial data.
A dimer model on a given bipartite graph is a weighted statistical ensemble of configurations of ``dimers" occupying the edges of the graph in such a way that each vertex of the graph is covered by exactly one dimer. The weight of the configuration is usually the product of weights of local configurations of dimers say around each face in the case of a graph embedded in a surface.

In this section, we extend the definition to dimer models with non-commutative weights on particular ladder-like graphs.

\subsubsection{The case of flat initial data path $\bm^{(0)}$.}
In this case, the corresponding network of Theorem \ref{netsolthm} is the concatenation of a succession of $U$/$V$ type chips, between the positions $j_0$ and $j_1$. In this section we show that the corresponding partition function of paths from entry connector $1$ to exit connector $1$
may be recast into the partition function of dimers on a suitable ``ladder" graph.

\begin{defn} The ladder graph ${\mathcal L}_N$ of length $N$ is a vertex-bicolored (black and white) 
planar graph with $2N-2$ vertices say at integer points in the rectangle 
$\{1,2\}\times \{1,2,...,N-1\}\subset \Z^2$, $N-2$ inner square faces labeled $2,3,...,N-1$, 
and two boundary ``faces" labeled 1,2 adjacent respectively
to the leftmost and rightmost vertical edge. By convention we color in black the vertex at the lower left corner.
Each face of the ladder graph is equipped with a pair $t_j,t_j^\bullet$, $j=1,2,...,N$
of non-commutative weights, where $j$  runs over the face labels in ${\mathcal L}_N$.
\end{defn}

\begin{defn}
The Non-Commutative (NC) dimer model on the ladder graph ${\mathcal L}_N$ is defined by attaching to each dimer configuration on 
${\mathcal L}_N$ a non-commutative weight as follows. The total weight of a given configuration is the product (in this order) of the
weights of faces $1,2,...,N$.
The left and right boundary face receive respectively the weight $t_i^{1-D_i}$, $i\in \{1,N\}$, where $D_i$ 
is the number of dimers ($\in \{0,1\}$) adjacent to the corresponding boundary face. Each inner face labeled $i\in \{2,3,...,N-1\}$ receives a weight 
as indicated in the table below, depending on the dimer configuration around the face, and on the parity of the face label:
\begin{equation}\label{rulesq}  \begin{matrix} \begin{matrix} {\rm dimer} \\ {\rm configuration} \end{matrix}
&\raise -.3cm\hbox{\epsfxsize=1.cm \epsfbox{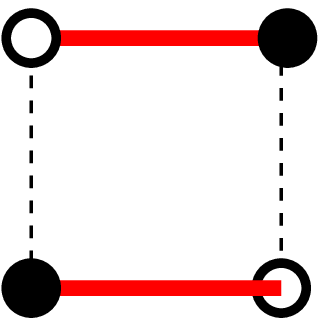} }\ \ 
& \raise -.3cm\hbox{\epsfxsize=1.cm \epsfbox{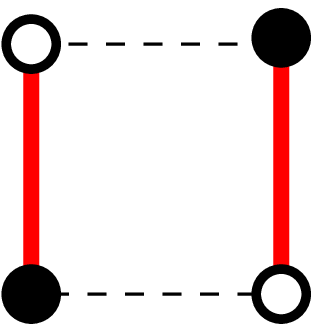} } \ \ 
&\raise -.3cm\hbox{\epsfxsize=1.cm \epsfbox{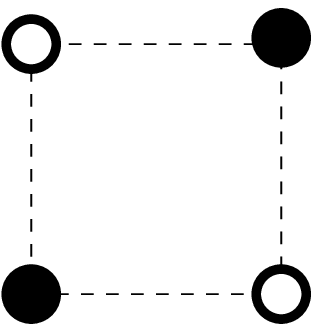} }\ \ 
&\raise -.3cm\hbox{\epsfxsize=1.cm \epsfbox{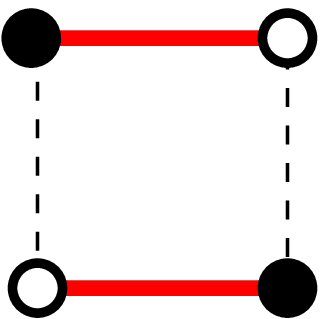} }\ \ 
& \raise -.3cm\hbox{\epsfxsize=1.cm \epsfbox{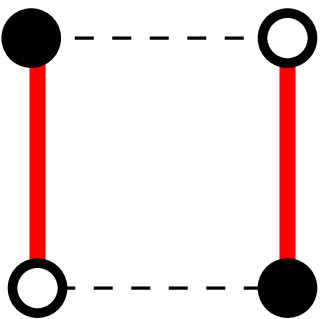} } \ \ 
&\raise -.3cm\hbox{\epsfxsize=1.cm \epsfbox{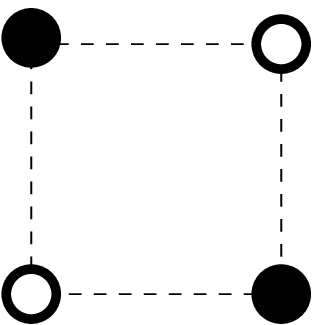} }\ \ 
& {\rm other} 
\\
& & & & & & \\
\begin{matrix} {\rm face} \\ {\rm weight} \end{matrix}
&t_{i}^{-1}
&(t_{i}^\bullet)^{-1}
&t_{i}^\bullet 
&(t_{i}^\bullet)^{-1}
&t_{i}^{-1}
&t_{i}
& 1
\\
\end{matrix} 
\end{equation}
(the face label in the first three cases is even, and odd in the next three). The partition function $Z({\mathcal L}_N)$ of the NC dimer model 
on the NC weighted ladder graph ${\mathcal L}_N$ is the sum over all dimer configurations on ${\mathcal L}_N$ of the
corresponding weights.
\end{defn}

\begin{thm}\label{flatdim}
The solution $T_{j,k}$ of the NC $A_1$ $T$-system with fundamental initial data 
$(\bm^{(0)},x_{\bm^{(0)}},x_{\bm^{(0)}}^\bullet)$ is the partition function $Z({\mathcal L}_N)$
for NC dimers on the ladder graph ${\mathcal L}_N$ with $N=2k+1$ and with face variables $t_{j_0},t_{j_0}^\bullet,t_{j_0+1},t_{j_0+1}^\bullet,\cdots t_{j_1},t_{j_1}^\bullet$, where $j=\frac{j_0+j_1}{2}$ and $k=\frac{j_1-j_0}{2}$.
\end{thm}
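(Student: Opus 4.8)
The starting point is Theorem~\ref{netsolthm}: for the fundamental data it already identifies $T_{j,k}$ with the partition function of paths from connector $1$ to connector $1$ on the network ${\mathcal N}_{\bm^{(0)}}(j_0,j_1)$, multiplied on the right by $t_{j_1}$. Since $\bm^{(0)}$ is flat, this network is the alternating chain of $V$ and $U$ chips, with $j_1-j_0=2k=N-1$ chips in all. The plan is therefore to prove that this path partition function, times $t_{j_1}$, equals $Z({\mathcal L}_N)$, which I would do by matching the two expansions term by term through an explicit bijection between network paths and perfect matchings of ${\mathcal L}_N$, with weights agreeing as ordered non-commutative monomials.

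I would construct the bijection locally and read it off column by column. Identify the $N-1$ rungs of ${\mathcal L}_N$ with the $N-1$ chips, and the $N$ faces $1,\dots,N$ with the $N$ gaps between and outside the rungs, the inner square $i$ sitting between rungs $i-1$ and $i$. A path crossing the network is determined by its level ($1$ or $2$) in each gap; I would declare the dimer occupancy of rung $c$ to record the chip transition at chip $c$, so that the local dimer pattern around each face — one of the six patterns of the table \eqref{rulesq}, or \emph{other} — is fixed by the levels in the neighbouring gaps. The face-weight table is then checked against the non-zero entries of $U(t_{i-1},t_i)$ and $V(t_{i-1},t_i)$ one configuration at a time, the even/odd split of the table being exactly the alternation of $U$-chips and $V$-chips along the flat path. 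Equivalently, and more cleanly for bookkeeping, one can package this as a transfer matrix: the rules \eqref{rulesq} are engineered so that the operator transferring the dimer state across one rung equals the corresponding chip matrix, whence scanning ${\mathcal L}_N$ from left to right rebuilds the product $M_{\bm^{(0)}}(j_0,j_1)$.

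The ordering is the genuinely non-commutative point, and it comes out for free once the dictionary is local: the dimer weight is by definition the product of face weights in the order $1,2,\dots,N$, while the path weight is the product of step weights in traversal order, and these two orders coincide under the chip/face identification. The two boundary faces need separate care. The prescription $t_i^{1-D_i}$ for $i\in\{1,N\}$ together with the fixed entry and exit at connector $1$ selects the admissible boundary dimers, and the extra right factor $t_{j_1}$ of Theorem~\ref{netsolthm} is precisely the contribution $t_N^{1-D_N}=t_{j_1}$ of the rightmost boundary face in every configuration compatible with exiting at level $1$.

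The hard part will be establishing that the local rule is a genuine global bijection while simultaneously respecting the non-commutative order. One must verify that the forced occupancies of the shared rungs are mutually consistent from face to face — so that every dimer cover arises from exactly one path and conversely — and that only the local patterns recorded in \eqref{rulesq} carry nontrivial weight, the remaining ones being absorbed into the \emph{other} column of weight $1$, all while the boundary bookkeeping (the $t_{j_1}$ factor and the $1-D_i$ exponents at faces $1$ and $N$) reconciles exactly. Once the table \eqref{rulesq} is matched against the entries of $U$ and $V$ and this global consistency is in place, the theorem follows from Theorem~\ref{netsolthm}. As a concrete sanity check I would first reproduce Example~\ref{flapathex}, verifying that the five paths computing $T_{3,3}$ correspond bijectively, and with matching ordered weights, to the five dimer covers of ${\mathcal L}_5$.
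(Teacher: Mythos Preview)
Your proposal is correct and follows essentially the same approach as the paper: a local, face-by-face bijection between paths from $1$ to $1$ on the network ${\mathcal N}_{\bm^{(0)}}(j_0,j_1)$ and dimer configurations on ${\mathcal L}_{2k+1}$, with weights matched by collecting at each face the contributions of the adjacent chip entries (the even/odd split of table~\eqref{rulesq} reflecting the $V/U$ alternation). The paper's own proof is terser than yours---it simply invokes the known path/dimer bijection and checks the five local cases pictorially---but the content is the same; your additional remarks on ordering, boundary bookkeeping, and the transfer-matrix repackaging are sound elaborations rather than a different route.
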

\begin{proof}
By bijection. We use the known bijection between paths of length $2k$ from entry connector 1 to exit connector 1 
on the network ${\mathcal N}_{\bm^{(0)}}(j_0,j_1)$ and the dimer configurations on ${\mathcal L}_{2k+1}$.
On even faces, the bijection maps the five possible local path configurations to five local dimer configurations according to the following dictionary:
\begin{equation}\label{fivecases} \raisebox{-2.cm}{\hbox{\epsfxsize=14.cm \epsfbox{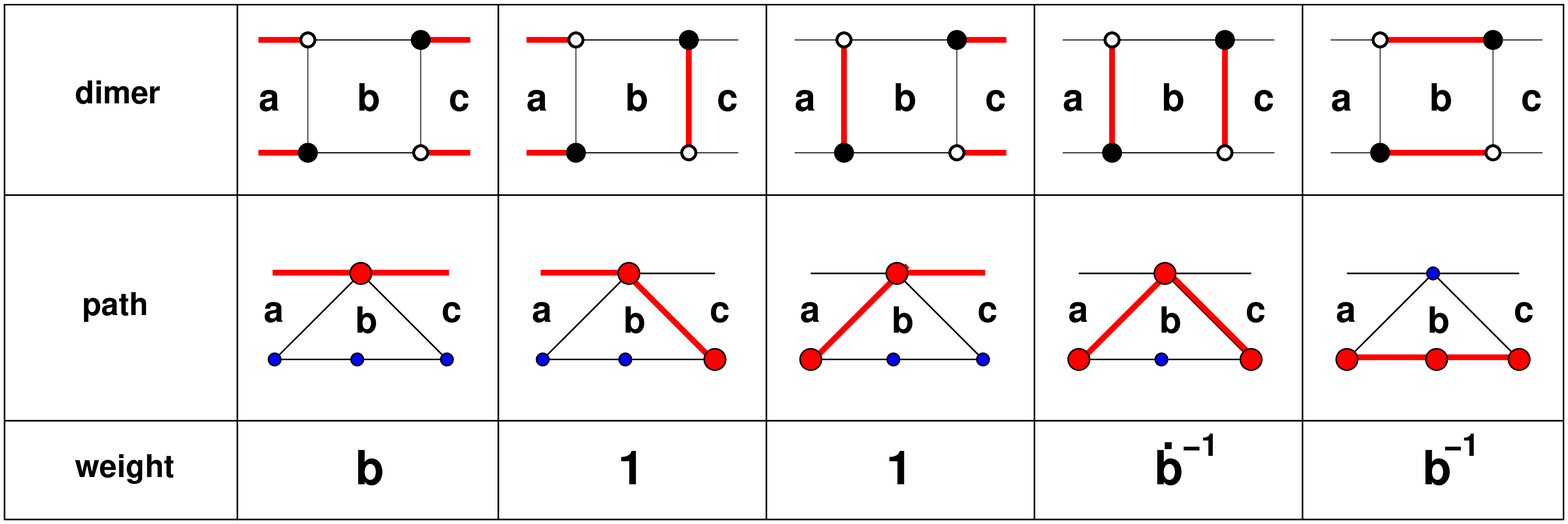}}}
\end{equation}
where we have indicated the $b,b^\bullet$ dependence of the step weights, which we take to be the weight of the corresponding
square face labeled $b$ of the dimer model.
Odd faces are treated similarly,  with $b$ and $b^\bullet$ interchanged. The theorem follows by inspection of the weights,
namely by collecting all contributions of path weights that involve the variables $b,b^\bullet$ of any given face.
\end{proof}

\begin{example}
Let us present the dimer version of Examples \ref{flatex} and \ref{flapathex}. 
The five paths contributing to $T_{3,3}$ correspond bijectively to the following dimer configurations:
$$ \hbox{\epsfxsize=14.cm \epsfbox{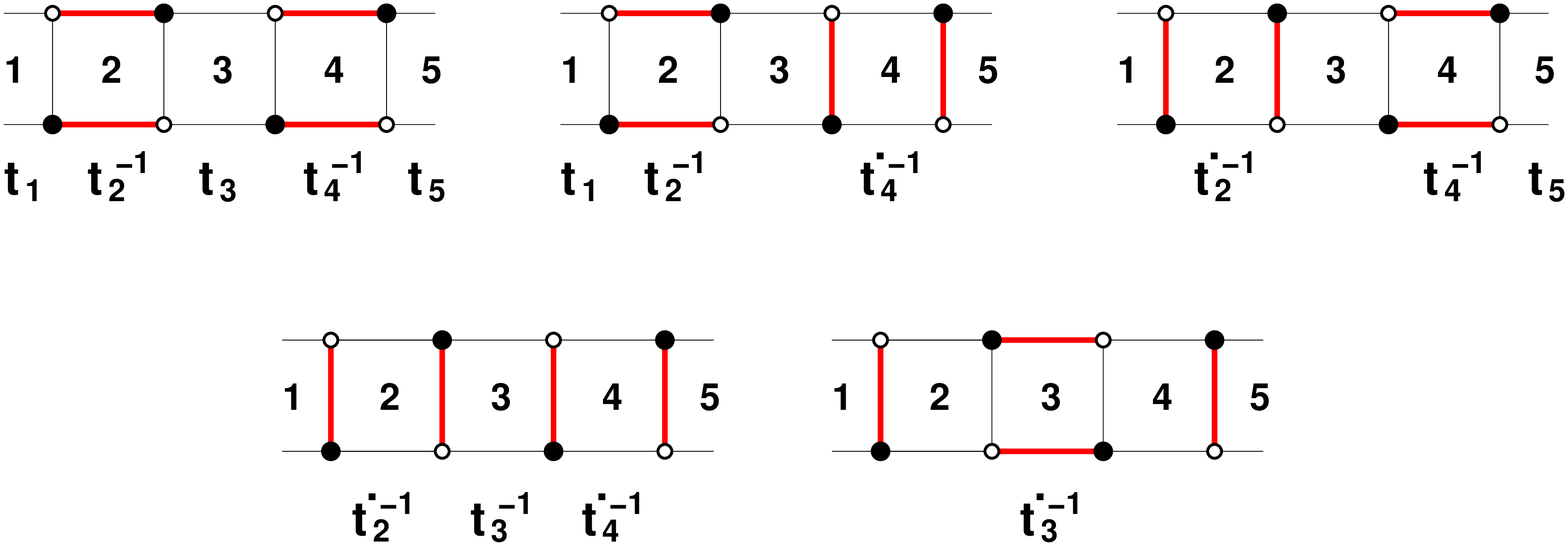}}$$
where we have indicated all the non-trivial face weights.
\end{example}

\subsubsection{General initial data path $\bm$}

In \cite{DF13}, it was shown that for general initial data the partition function of network paths giving rise to $T_{j,k}$
may be reformulated as the partition function of dimers on some $4-6$ generalized ladder graph ${\mathcal L}_\bm(j_0,j_1)$, 
made of chains of a succession of labeled square and hexagonal faces (with labels $j_0+1,j_0+2,...,j_1-1$)
with an additional left boundary face labeled $j_0$ and a right one labeled $j_1$.
More precisely, successions of up-down or down-up steps of $\bm$ give rise to square faces (as in the flat case of previous section):
\begin{eqnarray*}
V(a,b)U(b,c) \quad \to\quad
\raisebox{-.4cm}{\hbox{\epsfxsize=2.3cm \epsfbox{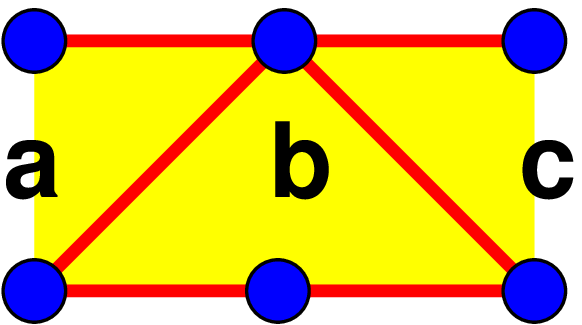}}} 
\quad &\to&\quad   \raisebox{-.5cm}{\hbox{\epsfxsize=2.2cm \epsfbox{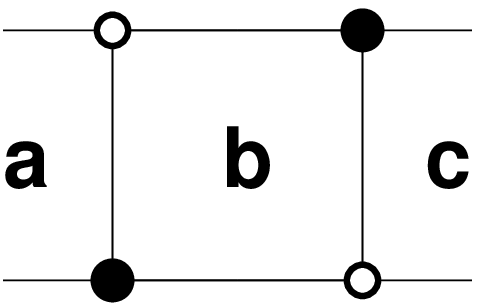}}}\\
U(a,b)V(b,c) \quad \to\quad
\raisebox{-.4cm}{\hbox{\epsfxsize=2.3cm \epsfbox{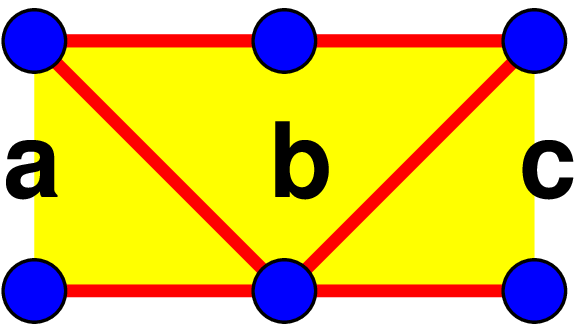}}} 
 \quad &\to&\quad  \raisebox{-.5cm}{\hbox{\epsfxsize=2.2cm \epsfbox{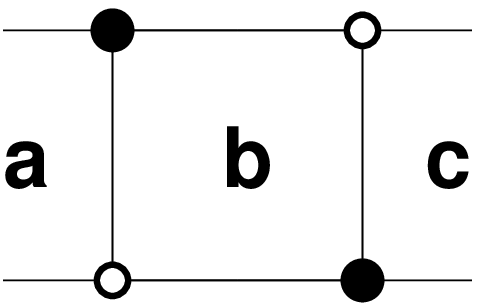}}}
\end{eqnarray*}
while successions of up-up or down-down steps of $\bm$ give rise to hexagons as follows:
\begin{eqnarray*}
U(a,b)U(b,c) \quad \to\quad
\raisebox{-.6cm}{\hbox{\epsfxsize=2.4cm \epsfbox{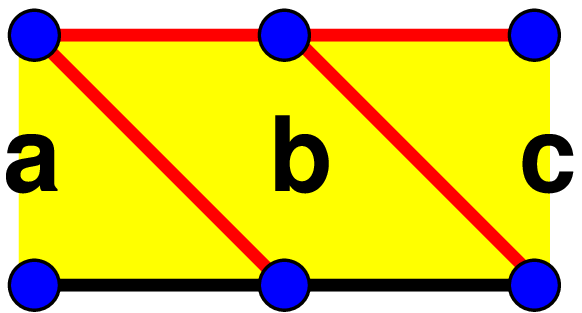}}} 
\quad &\to&\quad \raisebox{-.5cm}{\hbox{\epsfxsize=2.9cm \epsfbox{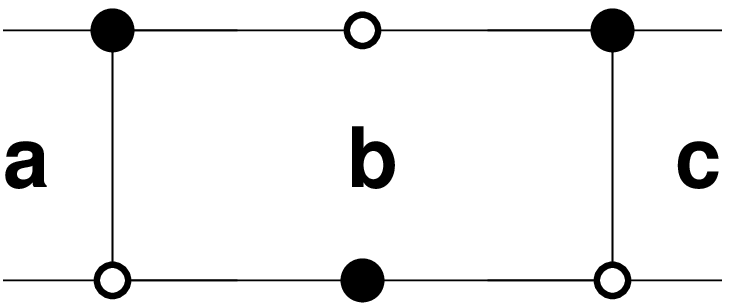}}} \\
V(a,b)V(b,c) \quad \to\quad
\raisebox{-.6cm}{\hbox{\epsfxsize=2.4cm \epsfbox{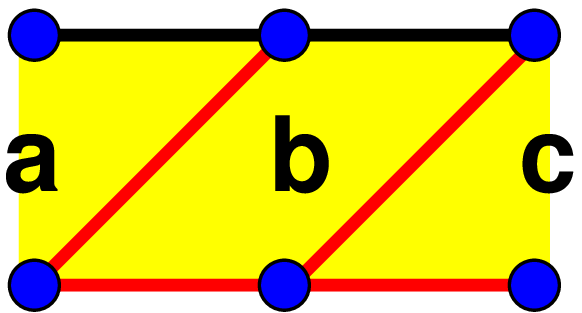}}} \quad &\to&\quad  
\raisebox{-.5cm}{\hbox{\epsfxsize=2.9cm \epsfbox{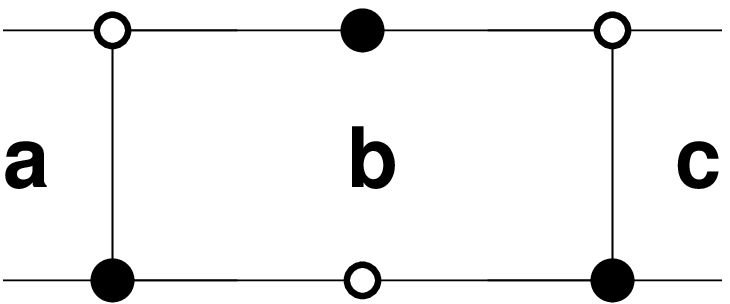}}}\end{eqnarray*}
One way of understanding this is via the generalized bijection between
arbitrary paths from 1 to 1 on the network ${\mathcal N}_\bm(j_0,j_1)$ and dimer configurations on 
the generalized ladder ${\mathcal L}_\bm$. 

As before the bijection is local, and in addition to the five cases \eqref{fivecases} giving rise to squares, we have the following
four cases giving rise to hexagons on even faces (with a black lower left vertex):
\begin{equation}\label{rulhex}  \hbox{\epsfxsize=16.cm \epsfbox{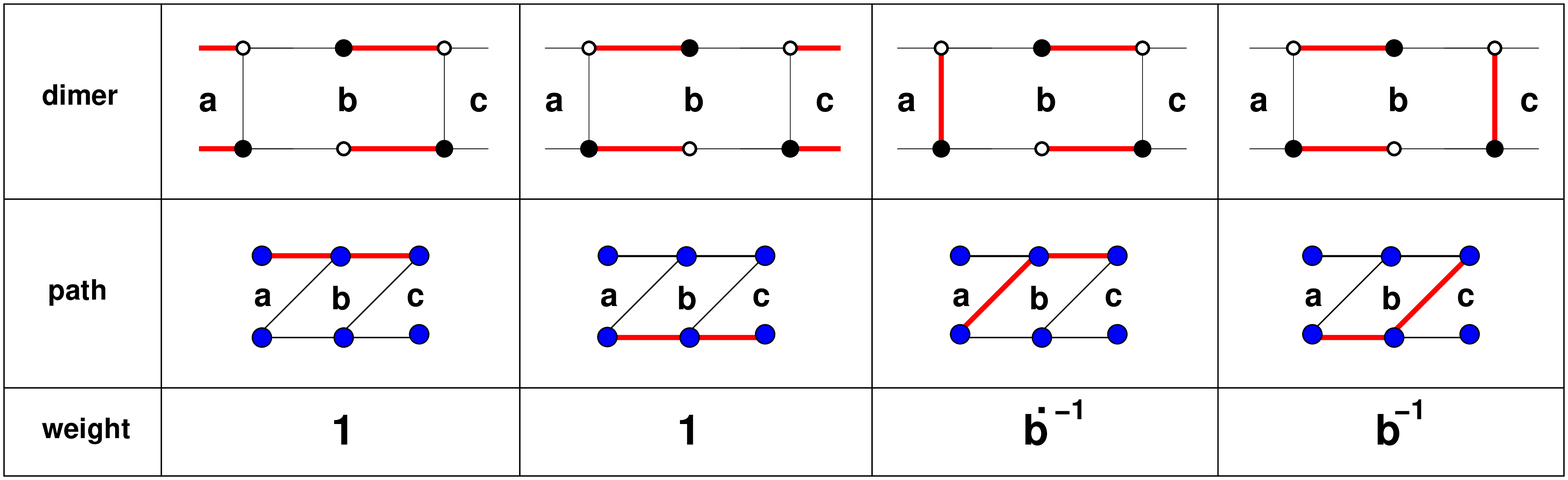}}\end{equation}
and similarly for the odd one (those with white lower left vertex), with $b$ and $b^\bullet$ interchanged.

\begin{defn}
We define the NC dimer model on a generalized ladder graph ${\mathcal L}_\bm(j_0,j_1)$ as the statistical 
ensemble of dimer configurations on ${\mathcal L}_\bm(j_0,j_1)$, each receiving the product of face weights from
$j_0$ to $j_1$ (from left to right), each face being weighted according to the previous rules of 
\eqref{rulesq} and their odd counterparts with $b\leftrightarrow b^\bullet$ for square faces and boundary faces and to those of \eqref{rulhex}
and their odd counterparts with $b\leftrightarrow b^\bullet$
for hexagonal ones.
The partition function $Z({\mathcal L}_\bm(j_0,j_1))$ of the NC dimer model 
on the NC weighted generalized ladder graph ${\mathcal L}_\bm(j_0,j_1)$ is the sum over all dimer configurations on 
${\mathcal L}_\bm(j_0,j_1)$ of the
corresponding weights.
\end{defn}

\begin{thm}
The solution $T_{j,k}$ of the NC $A_1$ $T$-system with arbitrary initial data
$(\bm,x_{\bm},x_{\bm}^\bullet)$ is the partition function $Z({\mathcal L}_\bm(j_0,j_1))$
for NC dimers on the generalized ladder graph ${\mathcal L}_\bm(j_0,j_1)$ with $N=2k-1$ faces with variables 
$t_{j_0},t_{j_0}^\bullet,t_{j_0+1},t_{j_0+1}^\bullet,\cdots t_{j_1},t_{j_1}^\bullet$, where $j=\frac{j_0+j_1}{2}$ and $k=1+\frac{j_1-j_0}{2}$.
\end{thm}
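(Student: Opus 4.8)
The plan is to prove the statement by a weight-preserving bijection, in direct parallel with the proof of Theorem \ref{flatdim}, the only new ingredient being that the generalized ladder ${\mathcal L}_\bm(j_0,j_1)$ now carries hexagonal faces alongside square ones. First I would apply Theorem \ref{netsolthm} to write $T_{j,k}$ as the partition function for paths from entry connector $1$ to exit connector $1$ on the NC network ${\mathcal N}_\bm(j_0,j_1)$, multiplied on the right by $t_{j_1}$. The problem then reduces to showing that this network-path partition function, after the right multiplication, matches $Z({\mathcal L}_\bm(j_0,j_1))$ monomial by monomial.

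Next I would set up the local bijection. The network ${\mathcal N}_\bm(j_0,j_1)$ is a concatenation of $U$ and $V$ chips dictated by the step sequence of $\bm$, and each consecutive pair of chips sharing a middle label $b$ falls into one of four types: the mixed pairs $V(a,b)U(b,c)$ and $U(a,b)V(b,c)$ produce square faces, while the same-type pairs $U(a,b)U(b,c)$ and $V(a,b)V(b,c)$ produce hexagonal faces (this underlying combinatorial correspondence is the one established in \cite{DF13}). The portion of any path crossing such a pair maps to a local dimer pattern according to the five square dictionaries of \eqref{fivecases} (together with their $b\leftrightarrow b^\bullet$ odd counterparts) and the four hexagon dictionaries of \eqref{rulhex} (together with their odd counterparts). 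I would then verify that these local rules glue: the connectors ($1$ or $2$) through which a path enters and leaves each face are shared with its neighbour, so the local dimer pieces agree on their common edges, and conversely each dimer configuration on ${\mathcal L}_\bm(j_0,j_1)$ restricts on every face to one listed pattern, reconstructing a unique admissible path. This yields the desired bijection between paths and dimer configurations.

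Finally I would check that the bijection preserves weights, which is the heart of the matter in the non-commutative setting. For each face label $i\in\{j_0,\dots,j_1\}$ I would collect, from the ordered step-weight product of a given path, all factors $t_i^{\pm1}$ and $(t_i^\bullet)^{\pm1}$ and confirm that they assemble into the single face weight prescribed by \eqref{rulesq} or \eqref{rulhex} (with the appropriate parity swap $b\leftrightarrow b^\bullet$), while the boundary faces $j_0$ and $j_1$ absorb the factors $t_{j_0}^{1-D_{j_0}}$ and $t_{j_1}^{1-D_{j_1}}$, the latter accounting for the overall right multiplication by $t_{j_1}$ supplied by Theorem \ref{netsolthm}. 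The crucial point is that the path is traversed and the face product is formed in the same left-to-right direction, and the face labels increase monotonically along the ladder; hence the order in which variables belonging to distinct faces occur is identical in both pictures and no reordering is ever required.

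The step I expect to be the main obstacle is precisely this ordering verification for the hexagonal faces. A hexagon spans two same-type chips $U(a,b)U(b,c)$ or $V(a,b)V(b,c)$, so its middle variables $t_b,t_b^\bullet$ may be contributed by either chip; I must confirm that, across all four local path patterns of \eqref{rulhex}, these contributions form a contiguous block in the ordered product and collapse exactly to the tabulated hexagon face weight, without disturbing the factors attached to the neighbouring labels $a$ and $c$. Once this contiguity-and-collapse is checked face by face --- the square faces being handled exactly as in Theorem \ref{flatdim} --- taking the ordered product over all faces gives $T_{j,k}=Z({\mathcal L}_\bm(j_0,j_1))$.
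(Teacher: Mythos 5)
Your proposal is correct and follows essentially the same route as the paper: invoke Theorem \ref{netsolthm} to express $T_{j,k}$ as a network-path partition function, then use the local path-to-dimer dictionary (\eqref{fivecases} for squares, \eqref{rulhex} for hexagons, with parity swaps) and collect the weights attached to each face's variables. The paper's proof is just a terser version of this; your extra attention to the left-to-right ordering and the contiguity of each face's contributions is exactly the point that makes the non-commutative case work (cf.\ Remark \ref{wello}).
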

\begin{proof}
By bijection. We use the above-mentioned bijection between the paths from 1 to 1 on ${\mathcal N}_\bm(j_0,j_1)$ and the dimer
configurations on ${\mathcal L}_\bm(j_0,j_1)$, and collect the weights involving the variables $t,t^\bullet$ of any given face to
recover the rules \eqref{rulesq} and \eqref{rulhex}.
\end{proof}

\begin{example}
Let us express $T_{2,4}$ in terms of initial data $T_{0,2}=a$, $T_{1,1}=b$, $T_{2,0}=c$, $T_{3,1}=d$, $T_{4,0}=e$, and $T_{5,1}=f$.
We have the following situation:
$$ \hbox{\epsfxsize=6.cm \epsfbox{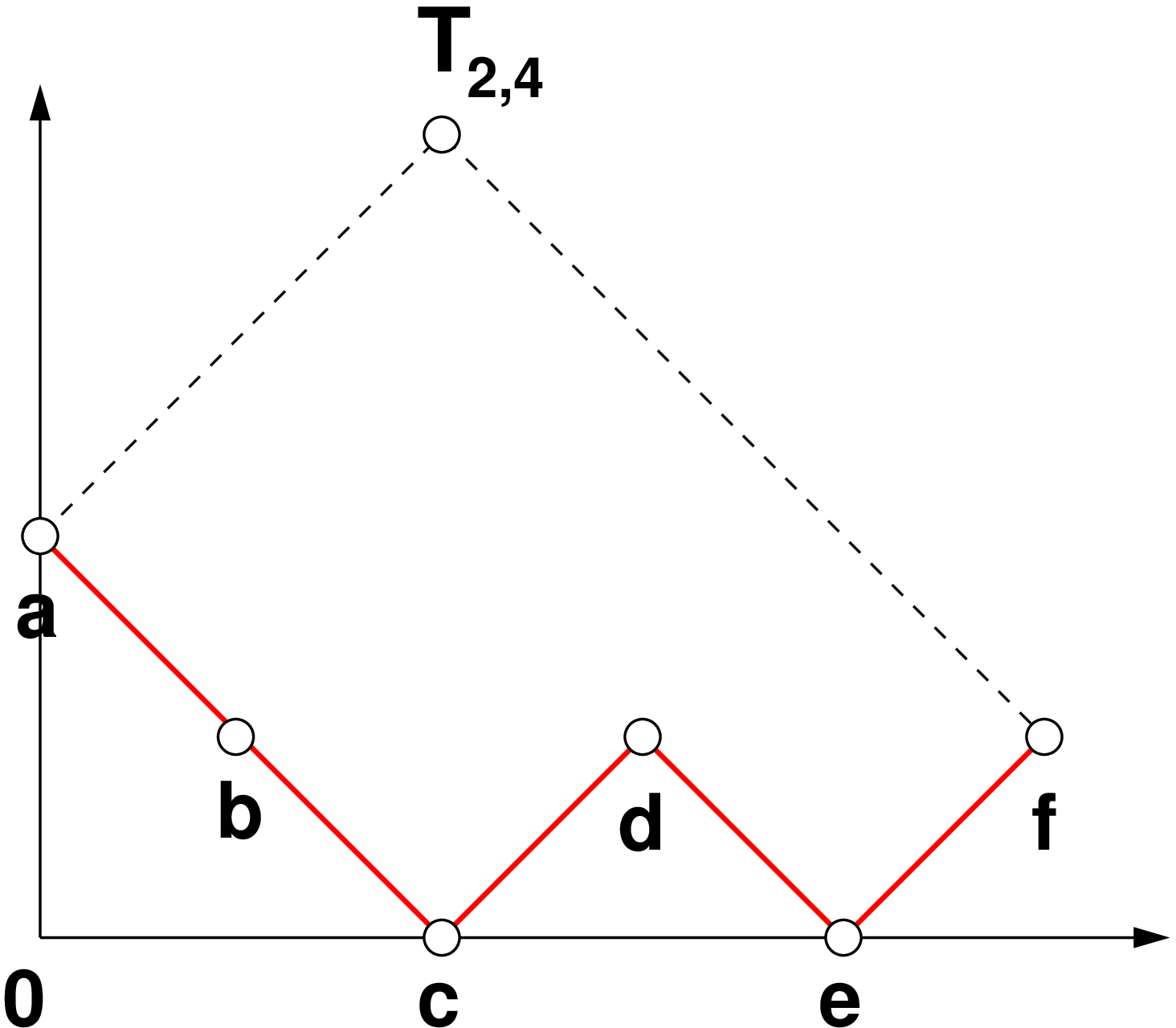}}   $$
with $j_0=0$ and $j_1=5$.
The solution from Theroem \ref{solution} is 
\begin{eqnarray*}T_{2,4}&=&\left( V(a,b)V(b,c)U(c,d)V(d,e)U(e,f)\right)_{1,1} f\\
&=&(b^\bullet)^{-1} d^{-1}e^{-1}+(b^\bullet)^{-1} e^{-1}f+(b^\bullet)^{-1} c^\bullet (d^\bullet)^{-1}
+a(b^\bullet)^{-1}(c^\bullet)^{-1}d^{-1}(e^\bullet)^{-1}\\
&&\quad + a b^{-1} (c^\bullet)^{-1}e^{-1}f+ab^{-1}(d^\bullet)^{-1}+a c^{-1}(e^\bullet)^{-1}+ac^{-1}d e^{-1}f
\end{eqnarray*}
To the relevant portion of initial data path between $j_0=0$ and $j_1=5$, we associate the following generalized  ladder graph:
$$  {\mathcal L}_\bm(j_0,j_1)=\ \  \raise-.5cm \hbox{\epsfxsize=6.cm \epsfbox{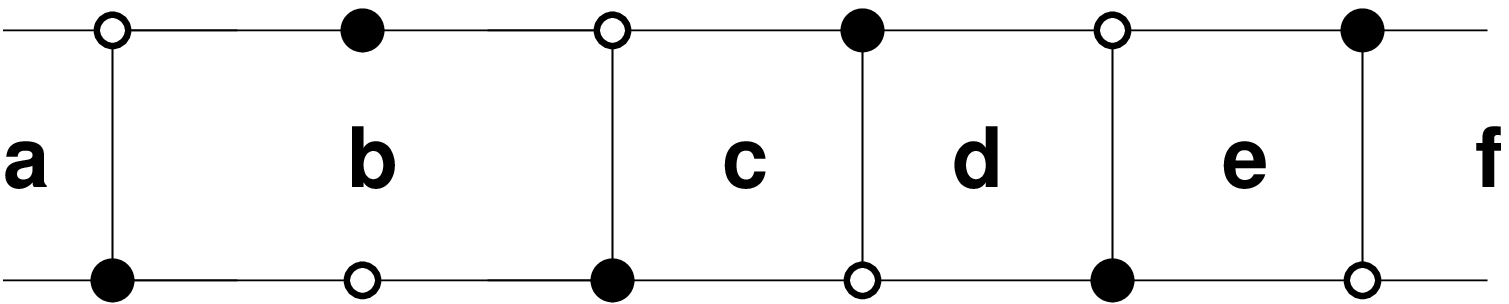}}  $$
and the eight dimer configurations contributing to $T_{2,4}$ are:
$$
\begin{matrix}
\raisebox{.0cm}{\hbox{\epsfxsize=4.8cm \epsfbox{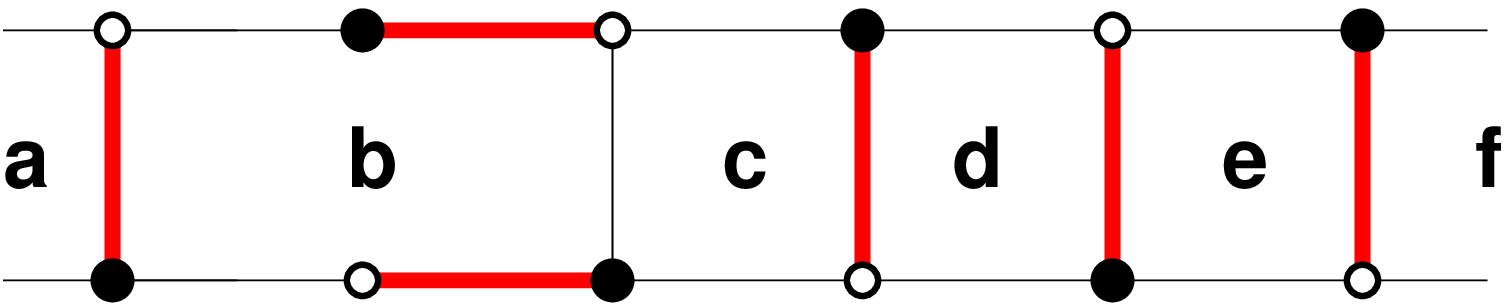}}} & \raisebox{.0cm}{\hbox{\epsfxsize=4.8cm \epsfbox{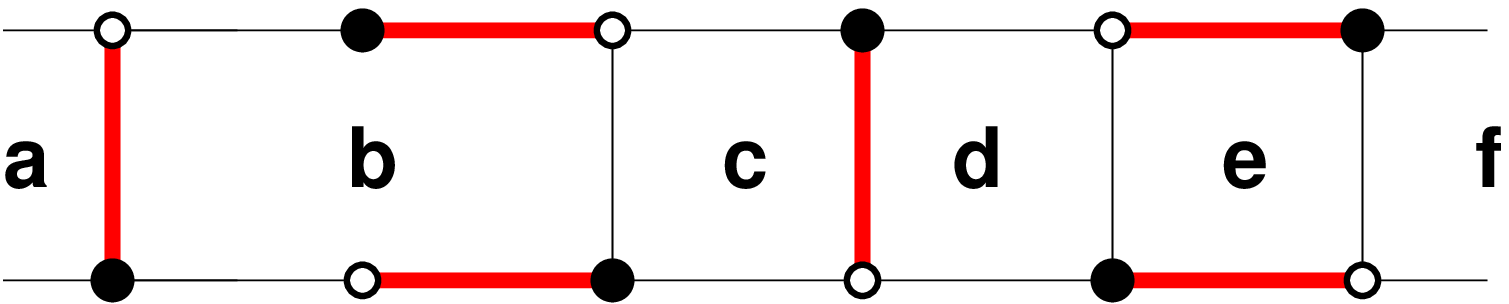}}} &
\raisebox{.0cm}{\hbox{\epsfxsize=4.8cm \epsfbox{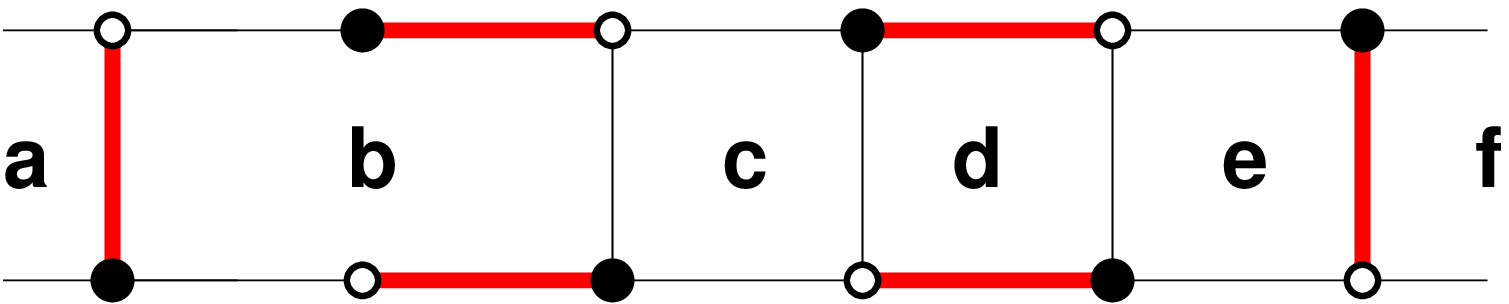}}} \\
(b^\bullet)^{-1} d^{-1}e^{-1} & (b^\bullet)^{-1} e^{-1}f & (b^\bullet)^{-1} c^\bullet (d^\bullet)^{-1}\\
 & & \\
\raisebox{-.0cm}{\hbox{\epsfxsize=4.8cm \epsfbox{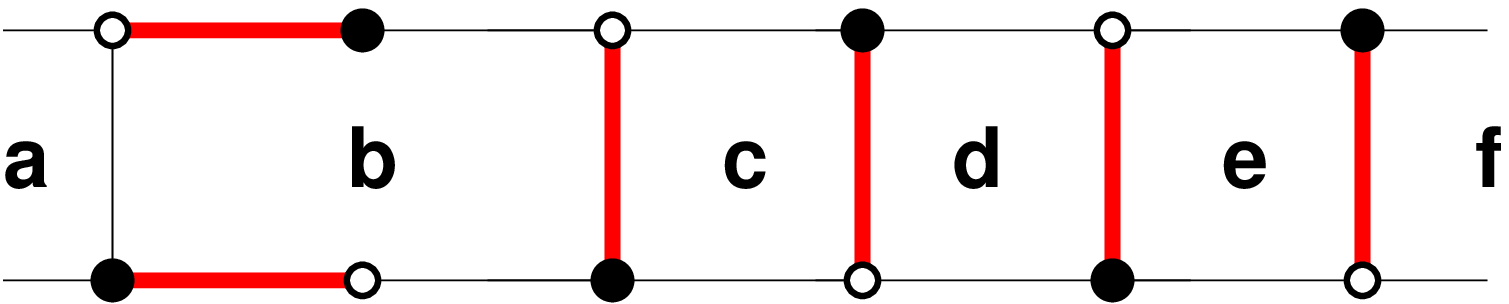}}} & \raisebox{-.0cm}{\hbox{\epsfxsize=4.8cm \epsfbox{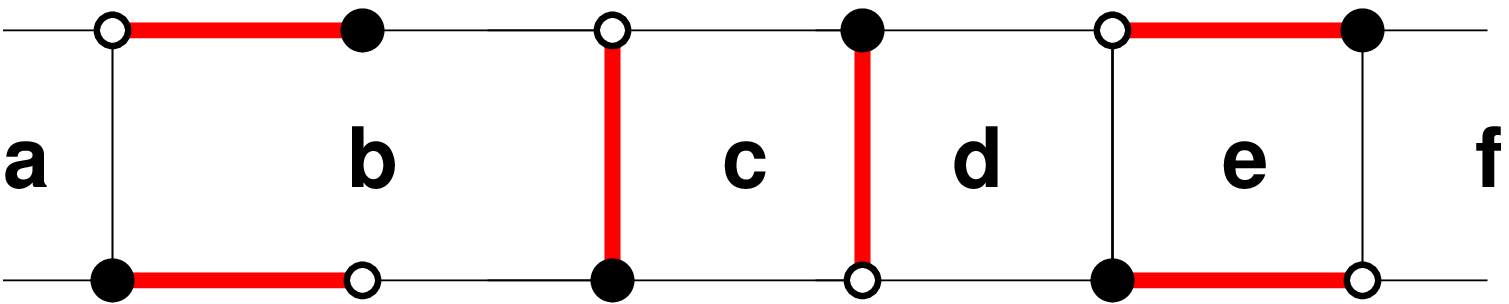}}} &
\raisebox{-.0cm}{\hbox{\epsfxsize=4.8cm \epsfbox{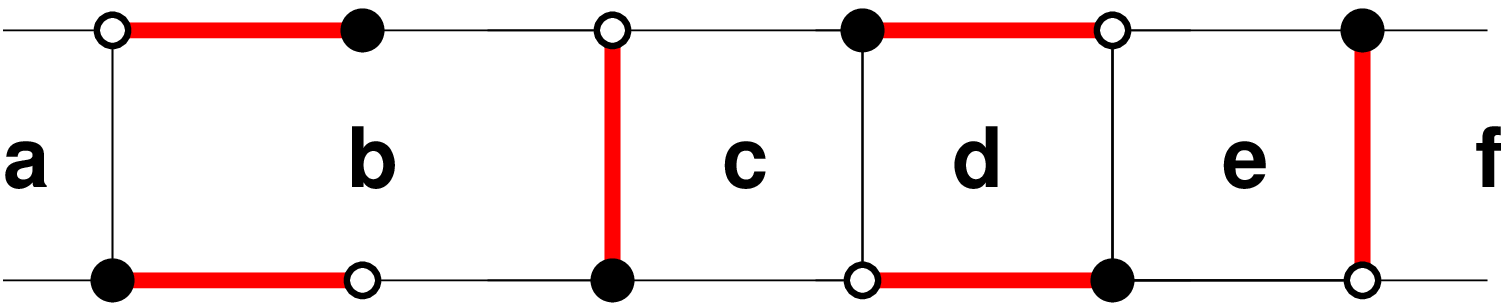}}} \\
a(b^\bullet)^{-1}(c^\bullet)^{-1}d^{-1}(e^\bullet)^{-1} & a b^{-1} (c^\bullet)^{-1}e^{-1}f & ab^{-1}(d^\bullet)^{-1}\\
& & \\
\raisebox{-.0cm}{\hbox{\epsfxsize=4.8cm \epsfbox{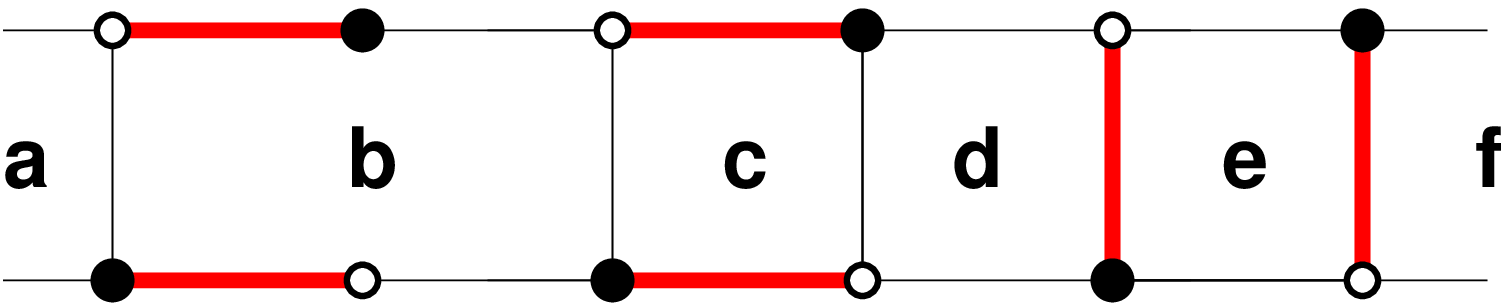}}} & \raisebox{-.0cm}{\hbox{\epsfxsize=4.8cm \epsfbox{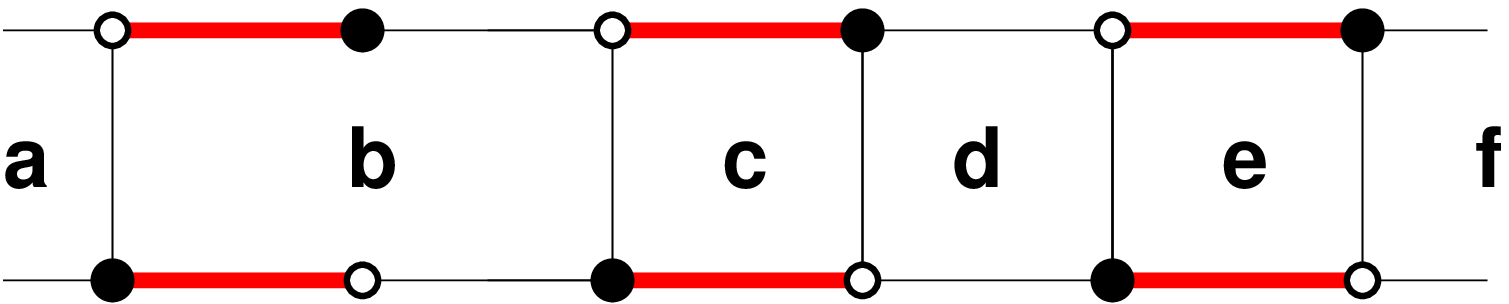}}}& \\
a c^{-1}(e^\bullet)^{-1} & ac^{-1}d e^{-1}f & \\
\end{matrix}
$$

\end{example}

\section{Conclusion and perspectives}

In this paper, we have introduced and solved a new non-commutative discrete integrable system of infinite dimension, and proved the positive Laurent property of its solutions in terms of initial data. In view of previous work in rank 2, it would be interesting to extend the definition of this system
to a full (infinite rank) non-commutative cluster algebra, allowing for more non-commutative transformations  preserving the Laurent property.

As a motivating example, let us consider the case of a general solution of the $A_1$ $T$-system,
for some fixed arbitrary initial data. Assume we have mutated the initial data so as to reach a situation where the new path is made locally of a succession of two up steps, say with  vertex values $a,b,c$ subject to $a^{-1}b=b^\bullet (a^\bullet)^{-1}$ and $b^{-1}c=c^\bullet (b^\bullet)^{-1}$.
We claim that the new ``mutation" $b\to b'$ at the central vertex (which takes us away from the $T$-system relations) still makes sense if defined by:
$$ b\, (b')^\bullet= a+c \quad {\rm or}\ {\rm equivalently} \quad b' b^\bullet=a^\bullet+c^\bullet$$
It is easy to see that this preserves the positive Laurent property, by noting that $(b')^\bullet=b^{-1}(a+c)=b^{-1}c +a^\bullet (b^\bullet)^{-1}$ is nothing but the conserved quantity $\Delta$ of Theorem \ref{consqthm}, expressed in terms of three aligned vertex values $a,b,c$. Indeed, this conserved quantity may be easily re-expressed in terms of the initial data, and it is readily seen that the expression for $(b)^\bullet$ divides that for $a+c$ on the left. From Remark \ref{selfad}, we deduce that $(b')^\bullet=b'$.
In addition, we learn that the new variable $b'$ satisfies the following ``triangle" relation:
$$ (a^\bullet)^{-1} (b')^\bullet c^{-1}= (c^\bullet)^{-1} b' a^{-1} 
 \quad {\rm or}\ {\rm equivalently} \quad  b' a^{-1} c= c^\bullet (a^\bullet )^{-1} (b')^\bullet$$
easily derived by straightforward algebra.
This relation generalizes the relations \eqref{quasicoinit}. It is very reminiscent of the triangular relations imposed in the 
finite rank case of non-commutative triangulations defined by Berenstein and Retakh \cite{RFO}.

Like in the classical case of \cite{DFK13}, we may try to extend the definition of the $A_1$ $T$-system 
for various geometries (half-plane, strip, etc.), in such a way as to preserve the positive Laurent property. 
We will address this problem in a later publication. We may also hope for a non-commutative version of 
Zamolodchikov's periodicity conjecture (say for type A).

Another direction of generalization should be to higher rank $T$- or $Q$-systems \cite{DFK10}. 
Many questions remain unanswered, such as the interpretation of higher order quasideterminants in 
terms of paths or dimers with non-commutative weights.


\begin{thebibliography}{10}

\bibitem{FRISES}
I. Assem, C. Reutenauer and D. Smith, {\em Friezes}, Adv. Math. {\bf 225} (2010), 3134--3165.
{\tt arXiv:0906.2026 [math.RA]}.

\bibitem{BR}A. Berenstein and V. Retakh, {\em 	
A short proof of Kontsevich's cluster conjecture},
Comptes Rendus Math\'ematique,
{\bf 349 (3-4)} (2011) 119--122.
{\tt arXiv:1011.0245 [math.QA]}.

\bibitem{BZ}A. Berenstein, A. Zelevinsky, \emph{Quantum Cluster Algebras},
Adv. Math. {\bf 195} (2005) 405--455. 
{\tt arXiv:math/0404446 [math.QA]}.


\bibitem{Cox} H.S.M. Coxeter, {\em Frieze Patterns, Triangulated Polygons and Dichromatic Symmetry}, 
in {\it The Lighter Side of Mathematics}, R.K. Guy and E. Woodrow (eds.), John Wiley $\&$ Sons, NY, (1961) pp 15-27.









  





\bibitem{DF13} P. Di Francesco, {\em T-systems, networks and dimers}, 
to appear in Comm. Math. Phys. (2014), {\tt arXiv:1307.0095 [math-ph]}.

\bibitem{DFK08} P. Di Francesco and R. Kedem, \emph{Q-systems as
    cluster algebras II}. {\tt
    arXiv:0803.0362 [math.RT]}.
    

%

\bibitem{DFK09b}P. Di Francesco and R. Kedem, 
\emph{Discrete non-commutative integrability: proof of a 
conjecture by M. Kontsevich}, to appear in Int. Math. Res. Notices.
{\tt arXiv:0909.0615 [math-ph]}.

\bibitem{DFK10}P. Di Francesco and R. Kedem, {\em Noncommutative integrability, 
paths and quasi-determinants}, Advances in Math. {\bf 228} No 1 (2011) 97--152.
{\tt arXiv:1006.4774 [math-ph]}.

\bibitem{DFK11}
P. Di Francesco and R. Kedem, {\em 
The solution of the quantum $A_1$ T-system for arbitrary boundary},
Comm. Math. Phys.
{\bf 313}, No 2, (2012) 329--350.
{\tt  arXiv:1102.5552 [math-ph]}.


\bibitem{DFK13}P. Di Francesco and R. Kedem, 
{\em T-system with boundaries from network solutions},
Elec. Jour. of Comb. Vol. {\bf 20(1)} (2013) P3.
{\tt arXiv:arXiv:1208.4333 [math.CO]}.






\bibitem{FZI} S. Fomin and A. Zelevinsky Cluster Algebras I.
  J. Amer. Math. Soc.   \textbf{15}  (2002),  no. 2, 497--529 {\tt
    arXiv:math/0104151 [math.RT]}.

  

\bibitem{GGRW} I. Gelfand, V. Retakh, Quasideterminants, I, Selecta Math. {\bf 3 (4)} (1997) 517--546; and
I. Gelfand, S. Gelfand, V. Retakh, and R.L. Wilson, 
{\em Quasideterminants.} Adv. Math. {\bf 193 (1)} (2005) 56--141.
{\tt arXiv:math/0208146v4 [math.QA]}.




	


	




\bibitem{Kon} M. Kontsevich, private communication.

	    


\bibitem{KNS} A. Kuniba, A. Nakanishi and J. Suzuki, {\em Functional relations 
in solvable lattice models. I. Functional relations and representation theory.} 
International J. Modern Phys. A {\bf 9} no. 30, pp 5215--5266 (1994).

\bibitem{LS} K. Lee and R. Schiffler, {\em  Proof of a positivity conjecture of M. Kontsevich on non-commutative cluster variables},
Comp. Math. {\bf 148 (6)}(2012)1821--1832.
{\tt arXiv:1109.5130 [math.QA]}.

	
\bibitem{RFO} V. Retakh, {\em Noncommutative Laurent phenomenon, triangulations and surfaces},
Mathematisches Forschungsinstitut Oberwolfach Report No. 58/2013, DOI: 10.4171/OWR/2013/58.
	
	
\bibitem{RUP} D. Rupel, {\em Proof of the Kontsevich non-commutative cluster positivity conjecture}, 
C. R. Math. Acad. Sci. Paris {\bf 350} (2012), No. 21--22,  929--932.
{\tt arXiv:1201.3426  [math.QA]}.

	
  
	

%







\end{thebibliography}
\end{document}